\newtheorem{theorem}{Theorem}[section]
\newtheorem{lemma}[theorem]{Lemma}
\theoremstyle{definition}
\newtheorem{definition}[theorem]{Definition}
\newtheorem{example}[theorem]{Example}
\newtheorem{corollary}[theorem]{Corollary}
\theoremstyle{remark}
\newtheorem{remark}[theorem]{Remark}
\numberwithin{equation}{section}
\begin{document}

\title[Acceleration of Clenshaw-Curtis quadrature]{Convergence rate and acceleration of Clenshaw-Curtis quadrature for functions with endpoint singularities}

\author{Haiyong Wang}
\address{School of Mathematics and Statistics, Huazhong University of Science and
Technology, Wuhan 430074, P. R. China}
\email{haiyongwang@hust.edu.cn}
\thanks{The author was supported by the National Science Foundation of China (No.~11301200).}


\subjclass[2010]{Primary 65D32, 41A25, 65B05.}



\keywords{Clenshaw-Curtis quadrature, rate of convergence, endpoint
singularities, asymptotic expansion, extrapolation acceleration}

\begin{abstract}
In this paper, we study the rate of convergence of Clenshaw-Curtis
quadrature for functions with endpoint singularities in $X^s$, where
$X^s$ denotes the space of functions whose Chebyshev coefficients
decay asymptotically as $a_k = \mathcal{O}(k^{-s-1})$ for some
positive $s$. For such a subclass of $X^s$, we show that the
convergence rate of $(n+1)$-point Clenshaw-Curtis quadrature is
$\mathcal{O}(n^{-s-2})$. Furthermore, an asymptotic error expansion
for Clenshaw-Curtis quadrature is presented which enables us to
employ some extrapolation techniques to accelerate its convergence.
Numerical examples are provided to confirm our analysis.
\end{abstract}

\maketitle

%

\section{Introduction}
The evaluation of the definite integral
\begin{equation}
I[f] := \int_{-1}^{1} f(x) dx,
\end{equation}
is one of the fundamental and important research topics in the field
of numerical analysis \cite{davis1984quadrature}. Given a set of
distinct nodes $\{ x_j \}_{j=0}^{n}$, an interpolatory quadrature
rule of the form
\begin{equation}\label{eq:quadrature}
Q_n[f] := \sum_{j=0}^{n} w_j f(x_j),
\end{equation}
can be constructed to approximate the above integral by requiring
$I[f] = Q_n[f]$ whenever $f(x)$ is a polynomial of degree $n$ or
less. In order to obtain a stable quadrature rule, the quadrature
nodes with the Chebyshev density $\mu(x) = 1/\sqrt{1-x^2}$ are
preferable. Ideal candidates are the roots or extrema of classical
orthogonal polynomials such as Chebyshev and Legendre polynomials.

Clenshaw-Curtis quadrature rule, which is the interpolatory
quadrature formula based on the extrema of Chebyshev polynomials,
has attracted considerable attention in the past few decades. Let
$\{ x_j \}_{j=0}^{n}$ be the Clenshaw-Curtis points or the
Chebyshev-Lobatto points
\begin{equation}\label{eq:chebyshev points}
x_j = \cos\left( \frac{j \pi}{n} \right), \quad j=0,\ldots,n.
\end{equation}
Then the Clenshaw-Curtis quadrature rule is
\begin{equation}
I_{n}^{C}[f] :=  \sum_{j=0}^{n} w_j f(x_j),
\end{equation}
where the quadrature weights are given explicitly by
\cite[p.~86]{davis1984quadrature}
\begin{equation}
w_j = \frac{4\delta_j}{n} \sum_{k=0}^{ [\frac{n}{2}] }
 \frac{\delta_{2k} }{1-4k^2} \cos\left( \frac{2jk\pi}{n} \right),
\end{equation}
and the coefficients $\delta_j$ are defined as
\begin{equation}
\delta_j = \left\{\begin{array}{cc}
                                        1/2,   & \mbox{$\textstyle  j=0$ or
                                          $j=n$},\\ [5pt]
                                          1,   & \mbox{otherwise}.
                                        \end{array}
                                        \right.
\end{equation}
Here $[\cdot]$ denotes the integer part. It is well known that the
quadrature weights are all positive and can be computed in only
$\mathcal{O}(n \log n)$ operations by the inverse Fourier transform
\cite{waldvogel2006clenshaw}.

Clenshaw-Curtis quadrature rule with $(n+1)$-point is exact for
polynomials of degree less than or equal to $n$. However, its
performance for differentiable functions is comparable with the
classic Gauss-Legendre quadrature which is exact for polynomials of
degree up to $2n+1$. This remarkable accuracy makes it
extraordinarily attractive and many studies have been done on the
error behaviour of
the Clenshaw-Curtis quadrature 
(see, for example,
\cite{mason2003chebyshev,OHara1968ccquad,Riess1972ccquadrature,trefethen2008gausscc,trefethen2013atap,xiang2012clenshawcurtis}).
In particular, Trefethen in \cite{trefethen2008gausscc} presented a
comprehensive comparison of error bounds of Gauss and
Clenshaw-Curtis quadrature rules for analytic and differentiable
functions. For the latter, an $\mathcal{O}(n^{-s})$ bound was
established for functions belong to $ X^s$, where $X^s$ denotes the
space of functions whose Chebyshev coefficients decay asymptotically
as $a_k = \mathcal{O}(k^{-s-1})$ for some positive $s$. More
recently, Xiang and Bornemann in \cite{xiang2012clenshawcurtis}
presented a more accurate estimate and showed that the optimal rate
of convergence of Clenshaw-Curtis quadrature rule for $f \in X^s$ is
$\mathcal{O}(n^{-s-1})$.

In this work, we are interested in the rate of convergence of
Clenshaw-Curtis quadrature for the integrals $\int_{-1}^{1} f(x)
dx$, where the integrands $f(x)$ have singularities at one or both
endpoints. More specifically, we assume that
\begin{equation}\label{eq:algebraic functions}
f(x) = (1 - x)^{\alpha} (1 + x)^{\beta} g(x),
\end{equation}
where $\alpha, \beta \geq 0$ are not integers simultaneously and
$g(x) \in C^{\infty}[-1,1]$. Note that the assumption $\alpha, \beta
\geq 0$ is due to the fact that Clenshaw-Curtis quadrature needs to
evaluate the values of the integrand $f(x)$ at both endpoints. When
such kind of functions belong to the space $X^s$ where $s$ is
determined by the strength of singularities of $f$, however, we will
show that the optimal rate of convergence of Clenshaw-Curtis
quadrature for evaluating the integrals $\int_{-1}^{1} f(x) dx$ is
$\mathcal{O}(n^{-s-2})$, which is one power of $n$ better than that
given in \cite{xiang2012clenshawcurtis}. Furthermore, we also extend
our analysis to functions with algebraic-logarithmic endpoint
singularities of the form
\begin{equation}
f(x) = (1 - x)^{\alpha} (1 + x)^{\beta} \log(1 - x) g(x),
\end{equation}
where $\alpha$ is a positive integer and $\beta \geq 0$ and $g(x)
\in C^{\infty}[-1,1]$. Similarly, we show that the optimal rate of
convergence of Clenshaw-Curtis quadrature is also
$\mathcal{O}(n^{-s-2})$ if $f(x)$ belongs to $X^s$.

Apart from the close connection with the FFT, another particularly
significant advantage of Clenshaw-Curtis quadrature is that its
quadrature nodes are nested. This means that it is possible to
accelerate the convergence of Clenshaw-Curtis quadrature by using
some extrapolation schemes. In Section \ref{sec:acceleration}, we
shall explore the asymptotic expansion of the error of
Clenshaw-Curtis quadrature for functions with endpoint
singularities. An asymptotic series in negative powers of $n$ is
derived for even $n$, which allows to employ some extrapolation
schemes, such as the Richardson extrapolation approach, to
accelerate the convergence of Clenshaw-Curtis quadrature. Thus,
comparing with Gauss-Legendre quadrature, Clenshaw-Curtis quadrature
is a more attractive scheme for computing the integrals whose
integrands have endpoint singularities.

The rate of convergence of Gauss-Legendre quadrature for functions
with endpoint singularities has been investigated considerably in
the past decades (see
\cite{chawla1968gauss,lubinsky1984gauss,rabinowitz1968gauss,rabinowitz1986gauss,sidi2009variable,sidi2009gauss,verlinden1997gauss}
and references therein). For example, for functions like $f(x) =
(1-x)^{\alpha} g(x)$ where $\alpha>-1$ is not an integer and $g(x)$
is sufficiently smooth, Rabinowitz in
\cite{rabinowitz1968gauss,rabinowitz1986gauss} and Luninsky and
Rabinowitz in \cite{lubinsky1984gauss} have shown that the
asymptotic error estimate of the $n$-point Gauss-Legendre quadrature
is $\mathcal{O}(n^{-2\alpha-2})$ as $n \rightarrow \infty$. On the
other hand, Verlinden in \cite{verlinden1997gauss} and Sidi in
\cite{sidi2009gauss} further studied the asymptotic expansion of the
error of the Gauss-Legendre quadrature for functions with algebraic
and algebraic-logarithmic endpoint singularities. Although the rate
of convergence and asymptotic error expansion of Gauss-Legendre
quadrature for functions with endpoint singularities have been
extensively explored, we are still unable to find the corresponding
result for the Clenshaw-Curtis quadrature in the literature. This
motivates the author to conduct the current research.

The rest of the paper is organized as follows. In the next section,
we shall show that the rate of convergence of Clenshaw-Curtis
quadrature can be improved to $\mathcal{O}(n^{-s-2})$ if the
Chebyshev coefficients of functions in $X^s$ satisfy a more specific
condition; see Theorem \ref{thm:superconvergence of cc} for details.
In Section \ref{sec:asymptotic of Chebyshev coeff} we discuss the
asymptotic behaviour of Chebyshev coefficients of functions with
endpoints singularities, including algebraic and
algebraic-logarithmic singularities. An asymptotic error expansion
for Clenshaw-Curtis quadrature is presented in Section
\ref{sec:acceleration}. This allows us to use some extrapolation
schemes for convergence acceleration. We present some numerical
examples in Section \ref{sec:example} and give some concluding
remarks in Section \ref{sec:conclusion}.

\section{Conditions for enhanced convergence rate}\label{sec:improved rate of clenshaw}
In this section, we establish sufficient conditions under which the
rate of convergence of Clenshaw-Curtis quadrature for functions in
$X^s$ can be further enhanced. We commence our analysis from a
helpful lemma.
\begin{lemma}\label{lem:estimate H}
For each $k \geq 1$, we have
\begin{equation}\label{eq:estimate H}
\sum_{r=1}^{n} \frac{r^{2k}}{4r^2 - 1} = \frac{1}{4^{k-1}}
\frac{n(n+1)}{2(2n+1)} + \sum_{j=1}^{2k-1} \nu_j^{k} n^{2k-j} ,
\end{equation}
where
\begin{equation}\label{eq:last coeff H}
\nu_{2j+1}^{k} =  \frac{1}{\Gamma(2k-2j)} \sum_{p=1}^{j+1}
\frac{\Gamma(2k-2p+1) }{ \Gamma(2j-2p+3) } \frac{B_{2j-2p+2} }{
4^{p}} , \quad 0 \leq j \leq k-2,
\end{equation}
and
\begin{equation}\label{eq:last coeff H}
\nu_{2k-1}^{k} = \sum_{p=1}^{k-1} \frac{1}{4^{p} } B_{2k-2p}.
\end{equation}
Here $B_j$ denotes the $j$-th Bernoulli number ($B_0 = 1, B_2 =
\frac{1}{6}, \ldots$). Moreover,
\begin{equation}\label{eq:expansion coeff H}
\nu_{2j}^{k} = \frac{1}{2^{2j+1}} , \quad 1 \leq j \leq k-1.
\end{equation}
\end{lemma}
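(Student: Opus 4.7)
The plan is to split the summand into a polynomial piece plus a simple rational remainder, sum the polynomial piece by Faulhaber's formula, and telescope the remainder.

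First I would perform polynomial division, exploiting the recursion
$\frac{r^{2k}}{4r^{2}-1}=\frac{r^{2k-2}}{4}+\frac{1}{4}\cdot\frac{r^{2k-2}}{4r^{2}-1}$,
which iterated yields
$$\frac{r^{2k}}{4r^{2}-1}=\sum_{p=1}^{k-1}\frac{r^{2k-2p}}{4^{p}}+\frac{1}{4^{k-1}}\cdot\frac{r^{2}}{4r^{2}-1}
=\sum_{p=1}^{k-1}\frac{r^{2k-2p}}{4^{p}}+\frac{1}{4^{k}}+\frac{1}{4^{k}(4r^{2}-1)}.$$
Summing over $r=1,\dots,n$ and using the telescoping identity
$\sum_{r=1}^{n}\frac{1}{4r^{2}-1}=\frac{1}{2}\sum_{r=1}^{n}\bigl(\frac{1}{2r-1}-\frac{1}{2r+1}\bigr)=\frac{n}{2n+1}$, the last two pieces combine into
$\frac{n}{4^{k}}\cdot\frac{2(n+1)}{2n+1}=\frac{1}{4^{k-1}}\cdot\frac{n(n+1)}{2(2n+1)}$, which is exactly the first term on the right of \eqref{eq:estimate H}.

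The remaining work is to show that $\sum_{p=1}^{k-1}\frac{1}{4^{p}}\sum_{r=1}^{n}r^{2k-2p}$ equals $\sum_{j=1}^{2k-1}\nu_{j}^{k}n^{2k-j}$ with the stated coefficients. I would apply Faulhaber's formula in the form
$$\sum_{r=1}^{n}r^{2q}=\frac{n^{2q+1}}{2q+1}+\frac{n^{2q}}{2}+\sum_{i=1}^{q}\binom{2q}{2i-1}\frac{B_{2i}}{2i}\,n^{2q-2i+1}\qquad(q=k-p),$$
and then collect terms of equal degree. For even exponents $n^{2k-2l}$ with $1\le l\le k-1$, the only contribution comes from $\frac{n^{2k-2p}}{2}$ with $p=l$, giving $\frac{1}{2\cdot 4^{l}}=\frac{1}{2^{2l+1}}$; this proves \eqref{eq:expansion coeff H}. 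For odd exponents $n^{2k-2j-1}$ with $0\le j\le k-2$, two sources contribute: the leading term $\frac{n^{2k-2p+1}}{(2k-2p+1)4^{p}}$ at $p=j+1$, and the Bernoulli tail at $p+i=j+1$, $1\le p\le j$. A direct manipulation of the binomial coefficient,
$$\binom{2k-2p}{2j-2p+1}\frac{1}{2(j+1-p)}=\frac{\Gamma(2k-2p+1)}{\Gamma(2j-2p+3)\,\Gamma(2k-2j)},$$
merges the two sources (the $p=j+1$ term corresponding to $B_{0}=1$) into the single expression for $\nu_{2j+1}^{k}$ in \eqref{eq:last coeff H}.

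Finally, for $j=k-1$ the exponent $n^{1}$ is not reached by the $\frac{n^{2k-2p+1}}{(2k-2p+1)4^{p}}$ family (which would require $p=k$, out of range), so only the Bernoulli tail contributes with $i=k-p$. Since $\binom{2k-2p}{2k-2p-1}=2k-2p$, the factor $\frac{B_{2(k-p)}}{2(k-p)}$ simplifies to $B_{2k-2p}$, yielding the compact form $\nu_{2k-1}^{k}=\sum_{p=1}^{k-1}\frac{B_{2k-2p}}{4^{p}}$. The main bookkeeping obstacle is the identification of the Gamma/Bernoulli coefficients in the stated form, but once the degree-by-degree matching outlined above is carried out, the verification is routine.
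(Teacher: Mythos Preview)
Your proposal is correct and follows essentially the same route as the paper. The paper phrases the reduction as a recurrence $4H(n,j+1)=H(n,j)+\sum_{r=1}^n r^{2j}$ on the partial sums and then telescopes in $j$, whereas you carry out the equivalent polynomial division directly on the summand $\frac{r^{2k}}{4r^2-1}$; both arrive at $H(n,k)=\frac{1}{4^{k-1}}H(n,1)+\sum_{p=1}^{k-1}4^{-p}\sum_{r=1}^n r^{2k-2p}$, after which Faulhaber's formula and the coefficient bookkeeping you outline (which the paper omits) finish the argument.
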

\begin{proof}
Let $H(n,k)$ denote the sum on the left hand side of
\eqref{eq:estimate H}. It is easy to derive the following recurrence
relation
\begin{equation}\label{eq:rec H}
4 H(n,j+1) = H(n,j) + \sum_{r=1}^{n} r^{2j}.
\end{equation}
Let $S(n,j)$ denote the last sum on the right hand side of the above
equation. Multiplying both sides of the above equation by $4^{j-1}$
and summing over $j$ from $1$ to $k-1$, we obtain
\begin{equation}\label{eq:formula H}
H(n,k) = \frac{1}{4^{k-1}} H(n,1) + \sum_{j=1}^{k-1}
\frac{1}{4^{k-j}} S(n,j),
\end{equation}
where the sum on the right hand side vanishes when $k=1$. For
$H(n,1)$, straightforward computation gives
\begin{equation}\label{eq:H one}
H(n,1) = \frac{n(n+1)}{2(2n+1)}.
\end{equation}
Moreover, using the Faulhaber's formula
\cite[Corollary~3.4]{javed2013trapezoidal}, we have
\begin{equation}\label{eq:faulhaber}
S(n,k) = \frac{n^{2k+1}}{2k+1} + \frac{n^{2k}}{2} + \sum_{j=1}^{k}
\frac{\Gamma(2k+1) B_{2j}}{\Gamma(2j+1) \Gamma(2k-2j+2)}
n^{2k-2j+1}.
\end{equation}
Substituting \eqref{eq:H one} and \eqref{eq:faulhaber} into
\eqref{eq:formula H} gives the desired result.
\end{proof}

In the following we shall present sufficient conditions for the
enhanced rate of convergence of Clenshaw-Curtis quadrature.

\begin{theorem}\label{thm:superconvergence of cc}
Suppose $f \in X^s$ and if the Chebyshev coefficients of $f(x)$
decay asymptotically as
\begin{equation}\label{eq:cheb coefficients constant sign}
a_{m} = \frac{c(s)}{ m^{s+1}} + \mathcal{O}(m^{-s-2}), \quad m\geq
m_0,
\end{equation}
or
\begin{equation}\label{eq:cheb coefficients alternate sign}
a_m = (-1)^m \frac{c(s)}{m^{s+1}} + \mathcal{O}(m^{-s-2}), \quad
m\geq m_0,
\end{equation}
where $c(s)$ is independent of $m$. Then, for $n\geq \max\{m_0,
2\}$, the rate of convergence of Clenshaw-Curtis quadrature rule can
be improved to
\begin{equation}\label{eq:rate of cc}
E_n^{C}(f) = \mathcal{O}(n^{-s-2}).
\end{equation}
\end{theorem}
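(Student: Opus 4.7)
The plan is to expand $E_n^C(f)$ via the Chebyshev-polynomial aliasing at the Chebyshev-Lobatto nodes, substitute the prescribed asymptotics of $a_m$, and exhibit an exact cancellation at order $n^{-s-1}$ between the diagonal and off-diagonal contributions, so that only a residual of order $n^{-s-2}$ remains. Lemma~\ref{lem:estimate H} supplies the polynomial asymptotics needed to control the Taylor-expansion terms that arise after substitution.

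First, since Clenshaw-Curtis is exact on $\mathrm{span}\{T_0,\dots,T_n\}$, the error decomposes as $E_n^C(f)=\sum_{m>n}a_m(I[T_m]-I_n^C[T_m])$. The identity $T_m(x_j)=T_{m'}(x_j)$ with $m'\in[0,n]$ the reflection of $m$ modulo $2n$, together with $I[T_k]=-2/(k^2-1)$ for even $k$ and $I[T_k]=0$ for odd $k$, shows that only even indices $m$ contribute. Writing such $m$ as $m=2kn+2l$ with $k\ge 1$ and $l$ running over a window of length $n$ about $0$ (with small bookkeeping for the boundaries $|l|\approx n/2$), this produces an exact representation of the form
\[
 E_n^C(f)=-2\sum_{k=1}^{\infty}\Biggl\{a_{2kn}\Bigl(1+\tfrac{1}{4k^2n^2-1}\Bigr)+\sum_{l\ne 0}a_{2kn+2l}\Bigl[\tfrac{1}{(2kn+2l)^2-1}-\tfrac{1}{4l^2-1}\Bigr]\Biggr\}.
\]

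Next, insert $a_m=c(s)m^{-s-1}+\mathcal{O}(m^{-s-2})$. The remainder piece contributes $\mathcal{O}(n^{-s-2})$ by standard estimates on the bounded inner sums. For the leading piece, Taylor expand $(2kn+2l)^{-s-1}=(2kn)^{-s-1}\sum_{p\ge 0}\binom{-s-1}{p}(l/(kn))^p$; by symmetry the odd-$p$ terms vanish after summing over $l$, while for even $p=2j\ge 2$ the inner sum $\sum_{l=1}^{\lfloor n/2\rfloor}l^{2j}/(4l^2-1)$ is exactly $H(\lfloor n/2\rfloor,j)$ of Lemma~\ref{lem:estimate H} and is $\mathcal{O}(n^{2j-1})$. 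Multiplied by $(kn)^{-2j}(2kn)^{-s-1}$ and summed over $k$, each such term yields $\mathcal{O}(n^{-s-2})$.

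The decisive step is the cancellation at $p=0$: the constant Taylor term combines with the $l=0$ diagonal piece into
\[
 -2c(s)\sum_{k=1}^{\infty}(2kn)^{-s-1}\Bigl[1-\sum_{l\ne 0}\tfrac{1}{4l^2-1}\Bigr]+\mathcal{O}(n^{-s-2}),
\]
and the elementary identity $\sum_{l=1}^{N}1/(4l^2-1)=N/(2N+1)$ yields $\sum_{l\ne 0,\,|l|\le n/2}1/(4l^2-1)=1-\mathcal{O}(1/n)$, so the bracket is $\mathcal{O}(1/n)$ and the $k$-sum, governed by $\zeta(s+1)$, contributes only $\mathcal{O}(n^{-s-2})$. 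The alternating hypothesis $a_m=(-1)^m c(s)m^{-s-1}+\mathcal{O}(m^{-s-2})$ reduces to the same analysis because $(-1)^{2kn+2l}=1$ on the contributing even indices. The main obstacle is tracking the constants precisely through the Taylor expansion so that the boundary terms near $|l|=n/2$, the subleading $1/((2kn+2l)^2-1)$ factor, and the higher-order $H(\lfloor n/2\rfloor,j)$ pieces genuinely combine to confirm the $\mathcal{O}(n^{-s-2})$ bound without leaving a stray $\mathcal{O}(n^{-s-1})$ term.
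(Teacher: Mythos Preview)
Your proposal is correct and follows essentially the same route as the paper: the aliasing decomposition $E_n^C(T_m)=\tfrac{2}{1-m^2}-\tfrac{2}{1-4r^2}$ over $\Delta(n)$, substitution of the leading asymptotic $c(s)m^{-s-1}$, binomial (Taylor) expansion of $(1+r/(jn))^{-s-1}$, the use of Lemma~\ref{lem:estimate H} for the even-power sums, and the crucial $p=0$ cancellation $1-\sum_{l\ne0}1/(4l^2-1)=\mathcal{O}(1/n)$ are exactly the paper's ingredients. Your observation that the alternating case is immediate because only even $m$ contribute (so $(-1)^m=1$) is in fact slightly cleaner than the paper's ``analyzed similarly''; the only place where the paper is more explicit is in treating the asymmetric boundary term at $|r|=n/2$ for even versus odd $n$, which you correctly flag as bookkeeping.
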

\begin{proof} In \cite{xiang2012clenshawcurtis}, the authors have
presented a simple and elegant proof on the rate of convergence of
Clenshaw-Curtis quadrature. For the sake of clarity, we shall
briefly describe their idea and then give the key observation that
leads to \eqref{eq:rate of cc}.

Define
\begin{equation}
\Delta(n) = \{ m~ |~ m = 2jn + 2r,~ j\geq 1, ~ 1-n \leq 2r \leq n
\}.
\end{equation}
Note that the Clenshaw-Curtis rule is exact for polynomials of
degree $n$ and $E_n^{C}(f) = 0$ for odd functions $f$. The error of
the Clenshaw-Curtis quadrature rule can be written as
\begin{equation}\label{eq:error of cc}
E_n^{C}(f) = \sum_{m \in \Delta(n) } a_m E_n^{C}(T_m),
\end{equation}
where $T_j(x)$ denotes the Chebyshev polynomial of degree $j$.
Moreover, using the aliasing condition, we have that
\begin{equation}\label{eq:aliasing}
E_n^{C}(T_{m}) = \frac{2}{ 1 - m^2 } - \frac{2}{1-4r^2}, \quad m\in
\Delta(n).
\end{equation}
Substituting this into the reminder $E_n^{C}(f)$ yields
\[
E_n^{C}(f) = S_1 + S_2,
\]
where
\begin{equation}
S_1 = \sum_{m \in \Delta(n)} \frac{2a_{m} }{ 1 - m^2 }, \quad  S_2 =
\sum_{m \in \Delta(n)} \frac{2a_{m} }{ 4r^2 - 1 }.
\end{equation}
From the assumption that $f\in X^s$, it is easy to deduce that $S_1
= \mathcal{O}(n^{-s-2})$. The remaining task is to give an accurate
estimate of $S_2$. Using the following identities
\begin{equation}
\sum_{r=-\infty}^{\infty} \frac{1}{|4r^2-1|} = 2, \quad
\sum_{j=1}^{\infty} \frac{1}{j^{s+1}} = \zeta(s+1),
\end{equation}
where $\zeta(n)$ is the Riemann zeta function, Xiang and Bornemann
in \cite{xiang2012clenshawcurtis} deduced that
\begin{eqnarray}\label{eq:estimate s2}
| S_2 |  & \leq & \sum_{m \in \Delta(n)} \frac{2 |a_{m} | }{ | 4r^2 - 1 | } \nonumber \\
& = & \sum_{j=1}^{\infty}  \sum_{ 1-n \leq 2r \leq n} \frac{2 |
a_{2j n + 2r} | }{ |4r^2 - 1| } = \mathcal{O}(n^{-s-1}).
\end{eqnarray}
Hence, they proved that the convergence rate of the Clenshaw-Curtis
quadrature for $f \in X^s$ is $\mathcal{O}(n^{-s-1})$.

In the following, we shall show that if $f\in X^s$ and (\ref{eq:cheb
coefficients constant sign}) or (\ref{eq:cheb coefficients alternate
sign}) is satisfied, the rate of convergence of the Clenshaw-Curtis
quadrature can be further improved. The key observation is that the
estimate of $S_2$ can be further improved to
$\mathcal{O}(n^{-s-2})$. Here we only discuss the case (\ref{eq:cheb
coefficients constant sign}) and the case (\ref{eq:cheb coefficients
alternate sign}) can be analyzed similarly.

For $n \geq \max\{m_0, 2\}$, substituting the asymptotic of $a_m$
into $S_2$, we have
\begin{eqnarray}
S_2 & = & \sum_{ m\in \Delta(n) }
\frac{2a_{m} }{ 4r^2 - 1 } \nonumber \\
& = &  \sum_{ m\in \Delta(n) } \frac{2 c(s) }{ (4r^2 - 1) m^{s+1} }
+ \sum_{ m\in \Delta(n) }
\frac{2 }{ (4r^2 - 1)} \mathcal{O}(m^{-s-2}).   
\end{eqnarray}
In analogy to the estimate of \eqref{eq:estimate s2}, it is easy to
deduce that the last sum in the above equation is
$\mathcal{O}(n^{-s-2})$, and thus we get
\begin{align}\label{eq:expansion of s2}
S_2 & = \sum_{ m\in \Delta(n) } \frac{2 c(s) }{ (4r^2 - 1)
m^{s+1} } +  \mathcal{O}(n^{-s-2}) \nonumber \\
& =  \sum_{j=1}^{\infty}  \sum_{ 1-n \leq 2r \leq n} \frac{ 2 c(s)
}{(4r^2 - 1)(2jn+2r)^{s+1}} +
\mathcal{O}(n^{-s-2}) \nonumber \\
& =  \frac{2c(s)}{(2n)^{s+1}} \sum_{j=1}^{\infty} \frac{1}{j^{s+1}}
\sum_{ 1-n \leq 2r \leq n} \frac{1}{ 4r^2 - 1 } \left(1 + \frac{r}{
j n } \right)^{-s-1}  +  \mathcal{O}(n^{-s-2}).
\end{align}
We now consider the asymptotic behaviour of the double sum in the
above equation. First, we consider the case that $n$ is even.
Rearranging the inner sum, we obtain
\begin{align}\label{eq:rearrange sum even}
& \sum_{ 1-n \leq 2r \leq n} \frac{1}{ 4r^2 - 1} \left(1 +
\frac{r}{ j n } \right)^{-s-1} \nonumber \\
& = -1 + \sum_{k=1}^{n/2} \frac{1}{4k^2 - 1} \left[ \left( 1 +
\frac{k}{jn} \right)^{-s-1}
 +  \left( 1 - \frac{k}{jn} \right)^{-s-1} \right]  \\
 &~~~~~~~~~~~ - \frac{1}{n^2 - 1} \left( 1 - \frac{1}{2j}
 \right)^{-s-1}. \nonumber
\end{align}
Utilizing the following binomial series expansion
\begin{equation}\label{eq:binomial expansion}
(1+x)^{-\beta} = \sum_{k=0}^{\infty} (-1)^k \frac{ (\beta)_k }{k!}
x^k, \quad |x|<1,
\end{equation}
where $(z)_n$ is the Pochhammer symbol, we further get
\begin{align}
\sum_{ 1-n \leq 2r \leq n} \frac{1}{ 4r^2 - 1 } \left(1 + \frac{r}{
j n } \right)^{-s-1} & = -1 + \sum_{k=1}^{n/2}
\frac{2}{4k^2 - 1}  \sum_{q=0}^{\infty} \frac{(s+1)_{2q}}{(2q)!} \left( \frac{k}{jn} \right)^{2q}  \nonumber \\
 &~~~~~~~~~~~ - \frac{1}{n^2 - 1} \left( 1 - \frac{1}{2j}
\right)^{-s-1}.
\end{align}
This together with the following identities
\begin{equation}
\sum_{k=1}^{n/2} \frac{2}{4k^2 - 1} = \frac{n}{n+1}, \quad
\sum_{j=1}^{\infty}  \left( j-\frac{1}{2} \right)^{-s-1} = (2^{s+1}
- 1) \zeta(s+1),
\end{equation}
gives
\begin{align}
S_2 & =\frac{2c(s)}{(2n)^{s+1}} \bigg( - \left(
\frac{2^{s+1}-1}{n^2-1} + \frac{1}{n+1} \right)\zeta(s+1) \nonumber \\
&~~~~~~ + \sum_{j=1}^{\infty} \frac{1}{j^{s+1}} \sum_{k=1}^{n/2}
\frac{2}{4k^2 - 1} \sum_{q=1}^{\infty} \frac{(s+1)_{2q}}{(2q)!}
\left( \frac{k}{jn} \right)^{2q} \bigg)  + \mathcal{O}(n^{-s-2}).
\end{align}
Next, we explore the asymptotic behaviour of the last term inside
the bracket. By the results of Lemma \ref{eq:estimate H}, we have
\begin{align}
&~~~~~ \sum_{j=1}^{\infty} \frac{1}{j^{s+1}} \sum_{k=1}^{n/2}
\frac{2}{4k^2 - 1} \sum_{q=1}^{\infty} \frac{(s+1)_{2q}}{(2q)!}
\left( \frac{k}{jn} \right)^{2q} \nonumber \\
& = 2 \sum_{j=1}^{\infty} \frac{1}{j^{s+1}} \sum_{q=1}^{\infty}
\frac{(s+1)_{2q}}{(2q)! (jn)^{2q} } \left( \frac{n(n+2)}{4^q 2(n+1)
} +  \sum_{k=1}^{2q-1} \nu_{k}^{q} \left(
\frac{n}{2} \right)^{2q-k} \right) \nonumber \\
& = \frac{n(n+2)}{n+1} \sum_{j=1}^{\infty}
\frac{1}{j^{s+1}}\sum_{q=1}^{\infty}
\frac{(s+1)_{2q}}{(2q)!(2jn)^{2q}} \nonumber \\
&~~~ + 2\sum_{j=1}^{\infty} \frac{1}{j^{s+1}} \sum_{q=1}^{\infty}
\frac{(s+1)_{2q}}{(2q)! (jn)^{2q}} \left( \sum_{k=1}^{q-1}
\nu_{2k}^{q} \left( \frac{n}{2} \right)^{2q-2k} + \sum_{k=1}^{q}
\nu_{2k-1}^{q} \left( \frac{n}{2} \right)^{2q-2k+1} \right) .
\nonumber
\end{align}
Now using the explicit expression of $\nu_{k}^{q}$ and after some
elementary computations, we arrive at
\begin{align}
&~~ \sum_{j=1}^{\infty} \frac{1}{j^{s+1}} \sum_{k=1}^{n/2}
\frac{2}{4k^2 - 1} \sum_{q=1}^{\infty} \frac{(s+1)_{2q}}{(2q)!}
\left( \frac{k}{jn} \right)^{2q} \nonumber \\
& = \frac{1}{n^2-1} \sum_{j=1}^{\infty} \frac{1}{j^{s+1}}
\sum_{q=1}^{\infty} \frac{ (s+1)_{2q} }{ (2q)! (2j)^{2q} } \nonumber
\\
&~~~ + \left( \frac{n(n+2)}{n+1} - \frac{n^2}{n^2 - 1} \right)
\sum_{k = 1}^{\infty} \frac{(s+1)_{2k} \zeta(s+2k+1)}{(2k)!
(2n)^{2k}}  \\
& ~~~ + \sum_{k=0}^{\infty} \frac{1}{n^{2k+1}} \sum_{j=1}^{\infty}
\frac{1}{j^{s+1}} \sum_{\ell=0}^{\infty} \frac{
\nu_{2k+1}^{k+\ell+1} (s+1)_{2\ell+2k+2} }{2^{2\ell} (2\ell+2k+2)!
j^{2\ell+2k+2} } \nonumber \\
& = \mathcal{O}(n^{-1}). \nonumber
\end{align}
Hence, we immediately deduce that
\begin{align}
S_2 & = \mathcal{O}(n^{-s-2}), \quad  n \rightarrow\infty.
\end{align}
Thus, the desired result follows. For the case that $n$ is odd,
similar to \eqref{eq:rearrange sum even}, rearranging the summation
yields
\begin{align}
& \sum_{ 1-n \leq 2r \leq n} \frac{1}{ 4r^2 - 1 } \left(1 +
\frac{r}{ j n } \right)^{-s-1} \nonumber \\
& = -1 + \sum_{k=1}^{\frac{n-1}{2}} \frac{1}{4k^2 - 1} \left[ \left(
1 + \frac{k}{jn} \right)^{-s-1}
 +  \left( 1 - \frac{k}{jn} \right)^{-s-1} \right].
\end{align}
The remaining argument can be proceeded similarly as the case $n$ is
even and we omit the details. This proves the theorem.
\end{proof}

\begin{remark}
Functions satisfy \eqref{eq:cheb coefficients constant sign} or
\eqref{eq:cheb coefficients alternate sign} are only a subclass of
$X^s$. We will show in the next section that typical examples are
functions with endpoint singularities.
\end{remark}

\begin{remark}\label{eq:superconv of cc additional term}
If additional terms like
\begin{equation}
b_{m,s} = \pm\frac{d(s)}{m^{s+1+\mu}},~~~ \mbox{ or }~~~ \pm (-1)^m
\frac{d(s)}{m^{s+1+\mu}},
\end{equation}
where $d(s)$ is independent of $m$ and $0 < \mu < 1$, are added in
\eqref{eq:cheb coefficients constant sign} or \eqref{eq:cheb
coefficients alternate sign}. Then, similar to the estimate of
$S_2$, we can deduce that
\begin{equation}
\sum_{ m\in \Delta(n) } \frac{2b_{m,s} }{ 4r^2 - 1 } =
\mathcal{O}(n^{-s-2-\mu}).
\end{equation}
Hence, the rate of convergence of Clenshaw-Curtis quadrature rule is
also $E_n^{C}(f) = \mathcal{O}(n^{-s-2})$.
\end{remark}

\section{Asymptotics of Chebyshev coefficients of functions with endpoints
singularities}\label{sec:asymptotic of Chebyshev coeff} We have
showed that the rate of convergence of Clenshaw-Curtis quadrature
can be improved if the Chebyshev coefficients of $f(x)$ satisfy
\eqref{eq:cheb coefficients constant sign} or \eqref{eq:cheb
coefficients alternate sign}. It is natural to raise the following
question: what kind of functions satisfy these conditions? In this
section we shall give some typical examples, including functions
with algebraic and algebraic-logarithmic singularities. Moreover,
for each class of functions, we also establish the corresponding
rate of convergence of Clenshaw-Curtis quadrature.

\subsection{Functions with algebraic singularities}
Elliott in \cite{elliott1965chebyshev} and Tuan and Elliott in
\cite{tuan1972spectral} have investigated the asymptotic of
Chebyshev coefficients of the following singular functions
\begin{equation}
f(x) = ( 1 \pm x)^{\alpha} g(x),
\end{equation}
where $\alpha>0$ is not an integer and $g(x)$ is analytic in a
region containing the interval $[-1,1]$. For example, for $f(x) = (
1 - x)^{\alpha} g(x)$, it was shown that its Chebyshev coefficients
satisfy \cite[Eqn.~(4.13)]{tuan1972spectral}
\begin{equation}
a_n = - \frac{2^{1-\alpha} g(1) \sin(\alpha\pi) }{\pi n^{2\alpha+1}}
\Gamma(2\alpha+1) + \mathcal{O}(n^{-2\alpha-3}).
\end{equation}
Obviously, functions of this kind satisfy the conditions of Theorem
\ref{thm:superconvergence of cc}. For functions with endpoint
singularities of the following general form
\begin{equation}\label{eq:f both endpoint singularities}
f(x) = (1 - x)^{\alpha} (1 + x)^{\beta} g(x),
\end{equation}
where $\alpha , \beta$ are positive real numbers not integers and
$g(x)$ is analytic in a region containing both endpoints, Tuan and
Elliott in \cite{tuan1972spectral} proposed a complicated technique
to separate the singularities with the aid of auxiliary functions
and then derived the asymptotic of the Chebyshev coefficients. For
more details, we
refer the reader to \cite{tuan1972spectral}. 

In the following we shall present a simpler approach to analyze the
asymptotic of Chebyshev coefficients of $f(x) = (1 - x)^{\alpha} (1
+ x)^{\beta} g(x)$ with $\alpha, \beta
> - \frac{1}{2}$ are not integers simultaneously. Meanwhile, for the
sake of simplicity, we always assume that $g(x) \in
C^{\infty}[-1,1]$. However, the generalization to the case $g(x) \in
C^m[-1,1]$ for some positive integer $m$ is mathematically
straightforward. Note that the assumptions we consider here is more
general than that considered in
\cite{elliott1965chebyshev,tuan1972spectral}.

Before commencing our analysis, we give a useful lemma.
\begin{lemma}\label{lem:asymp for singular oscill integrals}
Suppose that
\begin{align*}
f(x)=(x-a)^{\gamma} (b-x)^{\delta} h(x)
\end{align*}
with $\gamma,\delta > -1$ and $h(x)$ is $m$ times continuously
differentiable for $x \in [a,b]$. Furthermore, define
\begin{align*}
\phi(x) = (x-a)^{\gamma} h(x), \quad  \psi(x)=(b-x)^{\delta} h(x).
\end{align*}
Then for large $\lambda$,
\begin{align}
\int_{a}^{b} f(x) e^{i\lambda x} dx & \sim e^{i \lambda a}
\sum_{k=0}^{m-1}\frac{\psi^{(k)}(a) e^{i \frac{\pi}{2} (k+\gamma+1)}
\Gamma(k+\gamma+1)}{\lambda^{k+\gamma+1}s!}
\nonumber \\
&~~~~~~  - e^{i \lambda b} \sum_{k=0}^{m-1}\frac{\phi^{(k)}(b) e^{i
\frac{\pi}{2} (k-\delta+1)}
\Gamma(k+\delta+1)}{\lambda^{k+\delta+1}s!}   \nonumber \\
&~~~  + \mathcal{O}(\lambda^{-m-1-\min\{\gamma,\delta\}}), \quad
\lambda \rightarrow \infty.
\end{align}
\end{lemma}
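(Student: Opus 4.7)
The plan is to reduce the integral to two one-sided model problems, one at each endpoint, by means of a smooth partition of unity, and then to apply the classical Erd\'elyi lemma for a Fourier integral with an integrable algebraic singularity at a single endpoint. Specifically, I would pick $\chi_a,\chi_b \in C^\infty[a,b]$ with $\chi_a+\chi_b\equiv 1$, $\chi_a\equiv 1$ in a neighbourhood of $a$ and $\chi_a\equiv 0$ in a neighbourhood of $b$, and write
\[
\int_a^b f(x) e^{i\lambda x}\,dx = \int_a^b (x-a)^\gamma \psi_a(x) e^{i\lambda x}\,dx + \int_a^b (b-x)^\delta \phi_b(x) e^{i\lambda x}\,dx,
\]
where $\psi_a(x)=\chi_a(x)\psi(x)$ and $\phi_b(x)=\chi_b(x)\phi(x)$. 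Each summand has only one algebraic endpoint in its effective support, the other endpoint being $C^\infty$. By construction $\psi_a^{(k)}(a)=\psi^{(k)}(a)$ and $\phi_b^{(k)}(b)=\phi^{(k)}(b)$ for $0 \le k < m$, so the boundary data appearing in the claimed expansion is exactly the one in the lemma statement.

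For the first piece I would substitute $y=x-a$ and Taylor expand $\psi_a(a+y)=\sum_{k=0}^{m-1}\psi^{(k)}(a)y^k/k!+R(y)$ with $R(y)=O(y^m)$, then integrate term by term against the model evaluation
\[
\int_0^{\infty} y^{k+\gamma} e^{i\lambda y}\,dy = \frac{\Gamma(k+\gamma+1)\,e^{i\pi(k+\gamma+1)/2}}{\lambda^{k+\gamma+1}}, \qquad \lambda>0,
\]
obtained by rotating the contour onto the positive imaginary axis (valid for $\gamma>-1$). Multiplying by $e^{i\lambda a}$ reproduces the first sum in the statement. A symmetric argument with the reflection $y=b-x$ handles the endpoint $b$, starting from the conjugate identity $\int_0^\infty y^{k+\delta}e^{-i\lambda y}\,dy=\Gamma(k+\delta+1)e^{-i\pi(k+\delta+1)/2}\lambda^{-(k+\delta+1)}$ and the chain-rule relation $\tilde\phi^{(k)}(0)=(-1)^k\phi^{(k)}(b)$, where $\tilde\phi(y)=\phi_b(b-y)$. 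The combined prefactor $(-1)^{k+1}e^{-i\pi(k+\delta+1)/2}$ collapses via $-e^{-i\pi}=1$ to $e^{i\pi(k-\delta+1)/2}$, yielding the second displayed sum.

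It remains to bound the errors. Extending each one-sided integral from the compact support of $\psi_a$ (respectively $\phi_b$) to $[0,\infty)$ contributes only $O(\lambda^{-N})$ for every $N$, because the integrand is $C^\infty$ on the tail and the phase is non-stationary, so repeated integration by parts absorbs all negative powers of $\lambda$. The Taylor remainder contribution $\int_0^{\varepsilon} y^\gamma R(y) e^{i\lambda y}\,dy$ is $O(\lambda^{-m-\gamma-1})$ by the rescaling $y=t/\lambda$ followed by $m$ further integrations by parts; the analogous bound at $b$ has exponent $-m-\delta-1$. Taking the larger of the two orders gives the stated remainder $\mathcal{O}(\lambda^{-m-1-\min\{\gamma,\delta\}})$.

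The one genuinely delicate step is making the Taylor remainder estimate rigorous: after the rescaling, integrating by parts $m$ times requires that $R$ inherit $m$ derivatives from $h$, and one must treat the non-smoothness of $t^\gamma$ at $t=0$ (which is merely locally integrable) separately from its smoothness away from $0$. Both are standard, and are allowed by the hypothesis $h\in C^m[a,b]$ and $\gamma,\delta>-1$. The remaining work is purely bookkeeping of signs and factors of $e^{i\pi/2}$ to match the precise form stated in the lemma.
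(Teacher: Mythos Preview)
Your proposal is correct and follows essentially the same route as the paper: the paper does not give its own argument but simply cites Erd\'elyi's original proof based on neutralizer functions (your smooth partition $\chi_a,\chi_b$) together with integration by parts, and mentions Lyness's contour-integration alternative for analytic $h$. Your sketch is precisely a fleshed-out version of the Erd\'elyi method, including the contour rotation that yields the model integrals $\int_0^\infty y^{k+\gamma}e^{i\lambda y}\,dy$.
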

\begin{proof}
The first proof of this result was given by Erd\'{e}lyi in
\cite{erdelyi1955fourier}. The idea was based on the neutralizer
functions together with integration by parts
\cite[Thm.~3]{erdelyi1955fourier}. If $h(x)$ is analytic in a
neighborhood of the interval $[a,b]$, an alternative proof based on
the contour integration was given by Lyness
\cite[Thm.~1.12]{lyness1972fourier}.
\end{proof}

We now give the asymptotic of Chebyshev coefficients of functions
with algebraic endpoint singularities. 

\begin{theorem}\label{thm:asymptotic albebraic singularities}
For the function $f(x) = (1 - x)^{\alpha} (1 + x)^{\beta} g(x)$ with
$\alpha, \beta > -\frac{1}{2}$ are not integers simultaneously and
$g(x) \in C^{\infty}[-1,1]$, then its Chebyshev coefficients satisfy
\begin{align}\label{eq:asymptotic expansion Cheby coeff}
a_n & \sim  \frac{2^{\alpha+\beta+1}}{\pi} \bigg\{- \sin(\alpha\pi)
\sum_{k=0}^{\infty} \frac{ (-1)^k \hat{\psi}^{(2k)}(0)
\Gamma(2k+2\alpha+1) }{ n^{2k+2\alpha+1} (2k)! }  \nonumber\\
& - (-1)^n \sin(\beta\pi) \sum_{k=0}^{\infty} \frac{ (-1)^k
\hat{\phi}^{(2k)}(\pi) \Gamma(2k+2\beta+1) }{ n^{2k+2\beta+1} (2k)!
} \bigg\}, \quad n \rightarrow \infty,
\end{align}
where
\begin{equation}\label{eq:auxiliary functions}
\hat{\psi}(t) = (\pi - t )^{2\beta} \hat{g}( t) , \quad
\hat{\phi}(t) =  t^{2\alpha} \hat{g}(t),
\end{equation}
and
\begin{equation}\label{eq:hat g}
\hat{g}(t) = \left( t^{-1} \sin(t/2)\right)^{2\alpha} \left( (\pi -
t)^{-1} \cos(t/2) \right)^{2\beta} g(\cos(t)).
\end{equation}
These values $\hat{\psi}^{(2s)}(0)$ and $\hat{\phi}^{(2s)}(\pi)$ can
be calculated explicitly using the L'H\^{o}pital's rule. Here we
give the first several values
\begin{equation}
\hat{\psi}(0) = \frac{g(1)}{2^{2\alpha}}, \quad \hat{\phi}(\pi) =
\frac{g(-1)}{2^{2\beta}},
\end{equation}
and
\begin{equation}
\hat{\psi}{''}(0) = - \frac{g(1)}{2^{2\alpha+1}} \left(
\frac{\alpha}{3} + \beta \right) - \frac{ g'(1)}{2^{2\alpha}} ,
\quad \hat{\phi}{''}(\pi) = - \frac{g(-1)}{2^{2\beta+1}} \left(
\alpha + \frac{\beta}{3} \right) + \frac{ g'(-1)}{2^{2\beta}}.
\end{equation}
\end{theorem}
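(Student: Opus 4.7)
The plan is to start from the integral representation
\[
a_n = \frac{2}{\pi}\int_0^\pi f(\cos t)\cos(nt)\,dt,
\]
and then recast the integrand so that Lemma \ref{lem:asymp for singular oscill integrals} applies directly. Using $1-\cos t = 2\sin^2(t/2)$ and $1+\cos t = 2\cos^2(t/2)$, one writes $f(\cos t) = 2^{\alpha+\beta}\sin(t/2)^{2\alpha}\cos(t/2)^{2\beta}g(\cos t)$, and then factors out the endpoint singularities as
\[
\sin(t/2)^{2\alpha} = t^{2\alpha}\bigl(t^{-1}\sin(t/2)\bigr)^{2\alpha},\qquad
\cos(t/2)^{2\beta} = (\pi-t)^{2\beta}\bigl((\pi-t)^{-1}\cos(t/2)\bigr)^{2\beta}.
\]
The bracketed factors are smooth, strictly positive functions of $t$ on $[0,\pi]$, so their non-integer powers are well defined and smooth. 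This yields $f(\cos t) = 2^{\alpha+\beta}t^{2\alpha}(\pi-t)^{2\beta}\hat g(t)$ with $\hat g$ as in \eqref{eq:hat g}, and hence
\[
a_n = \frac{2^{\alpha+\beta+1}}{\pi}\,\mathrm{Re}\int_0^\pi t^{2\alpha}(\pi-t)^{2\beta}\hat g(t)e^{int}\,dt.
\]

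Next I would apply Lemma \ref{lem:asymp for singular oscill integrals} with $a=0$, $b=\pi$, $\gamma=2\alpha$, $\delta=2\beta$, $h=\hat g$ and $\lambda=n$. With the notation of \eqref{eq:auxiliary functions} this produces two asymptotic series in negative powers of $n$ coming from the endpoints $t=0$ and $t=\pi$, containing all derivatives $\hat\psi^{(k)}(0)$ and $\hat\phi^{(k)}(\pi)$, multiplied by the phase factors $e^{i(k+2\alpha+1)\pi/2}$ and $e^{in\pi}e^{-i(k+2\beta+1)\pi/2}$ respectively. Taking real parts converts these phases into $\cos$ or $\sin$ of $\pi\alpha,\pi\beta$ according to the parity of $k$.

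The key observation — which is the main obstacle and the reason only even derivatives appear in \eqref{eq:asymptotic expansion Cheby coeff} — is that $\hat\psi$ is even about $t=0$ and $\hat\phi$ is even about $t=\pi$, so that $\hat\psi^{(k)}(0)=0$ and $\hat\phi^{(k)}(\pi)=0$ for every odd $k$. Evenness of $\hat\psi$ follows from the simplification
\[
\hat\psi(t) = \cos(t/2)^{2\beta}\bigl(t^{-1}\sin(t/2)\bigr)^{2\alpha}g(\cos t),
\]
in which each factor is a function of $t^2$ (since $\sin(t/2)/t$, $\cos(t/2)$ and $\cos t$ all depend only on $t^2$ near the origin). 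For $\hat\phi$, the substitution $t=\pi+u$ gives
\[
\hat\phi(\pi+u) = \cos(u/2)^{2\alpha}\bigl(u^{-1}\sin(u/2)\bigr)^{2\beta}g(-\cos u),
\]
which is manifestly even in $u$. Consequently only the terms $k=2j$ survive, and a short trigonometric computation shows that the real part of the corresponding phase factor equals $-(-1)^j\sin(\pi\alpha)$ for the endpoint $t=0$ and $-(-1)^j(-1)^n\sin(\pi\beta)$ for the endpoint $t=\pi$, giving exactly \eqref{eq:asymptotic expansion Cheby coeff}.

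Finally, the explicit values of $\hat\psi(0)$, $\hat\phi(\pi)$, $\hat\psi''(0)$ and $\hat\phi''(\pi)$ follow by straightforward Taylor expansion at the respective endpoints: use $\sin(t/2)/t = \tfrac12-\tfrac{t^2}{48}+\cdots$ and the analogous expansion of $\cos(t/2)/(\pi-t)$ about $t=\pi$, combine with the Taylor series of $g(\cos t)$ and $g(-\cos u)$, and collect the constant and $t^2$ coefficients (equivalently, apply L'Hôpital's rule to the quotients defining $\hat g$). This is routine once the structural identification above is in place.
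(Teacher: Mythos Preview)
Your proposal is correct and follows essentially the same route as the paper: the substitution $x=\cos t$, the factoring of the endpoint singularities to produce $t^{2\alpha}(\pi-t)^{2\beta}\hat g(t)$, the application of Lemma~\ref{lem:asymp for singular oscill integrals}, and the use of the evenness of $\hat\psi$ about $0$ and of $\hat\phi$ about $\pi$ to eliminate the odd-order derivative terms. Your explicit simplifications of $\hat\psi(t)$ and $\hat\phi(\pi+u)$ are a slightly cleaner way to verify the parity properties than the paper's bare assertion $\hat\psi(-t)=\hat\psi(t)$, $\hat\phi(\pi+t)=\hat\phi(\pi-t)$, but the argument is otherwise identical.
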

\begin{proof}
First, make a change of variable $x = \cos t$, we have
\begin{align}\label{eq:transformed cheby coefficients}
a_n &= \frac{2}{\pi} \int_{0}^{\pi} f(\cos t) \cos(nt) dt \nonumber
\\
& = \frac{2^{\alpha+\beta+1}}{\pi} \int_{0}^{\pi} t^{2\alpha} (\pi -
t)^{2\beta} \hat{g}(t) \cos(nt) dt,
\end{align}
where $ \hat{g}(t) $ is defined as in \eqref{eq:hat g}. It is easy
to see that $\hat{g}(t) \in C^{\infty}[0, \pi]$. On the other hand,
we observe that $\hat{\psi}(t)$ defined in \eqref{eq:auxiliary
functions} is infinitely differentiable at $t=0$ while
$\hat{\phi}(t)$ is infinitely differentiable at $t=\pi$, and
\[
\hat{\psi}(-t) = \hat{\psi}(t), \quad \hat{\phi}(\pi + t) =
\hat{\phi}(\pi - t).
\]
Hence, it holds that
\begin{equation}
\hat{\psi}^{(2k+1)}(0) = 0, \quad \hat{\phi}^{(2k+1)}(\pi) = 0,
\quad k \geq 0.
\end{equation}
The desired result then follows from applying Lemma \ref{lem:asymp
for singular oscill integrals} with $m = \infty$ to the integral
\eqref{eq:transformed cheby coefficients}.

\end{proof}

\begin{remark}
The assumption $\alpha, \beta > - \frac{1}{2}$ can not be relaxed to
$\alpha, \beta > -1$ since the Chebyshev coefficients $a_n$ of the
function $f(x) =  (1-x)^{\alpha} (1+x)^{\beta} g(x)$ will be
divergent if one of $\alpha$ and $\beta$ is less than or equal to
$-\frac{1}{2}$.
\end{remark}

\begin{corollary}
Under the same assumptions as in Theorem \ref{thm:asymptotic
albebraic singularities}, if neither $\alpha$ nor $\beta$ is a
nonegative integer, then the leading term of Chebyshev coefficients
of $f(x)$ is given by
\begin{equation}\label{eq:asymptotic one}
a_n = \left\{\begin{array}{ccc}
                                           {\displaystyle  (-1)^{n+1} \frac{2^{\alpha-\beta+1} g(-1) \sin(\beta\pi)}{\pi
n^{2\beta+1}}\Gamma(2\beta+1) + \mathcal{O}(n^{- \min\{ 2\alpha+1,
2\beta+3 \}}) },   & \mbox{$\alpha>\beta$}, \\ [10pt]
                                           {\displaystyle  - \frac{2 \sin(\alpha\pi) \Gamma(2\alpha+1)}{\pi
n^{2\alpha+1}} \left( g(1) + (-1)^n g(-1) \right) + \mathcal{O}(n^{-2\alpha-3})}, & \mbox{$\alpha = \beta$}, \\
[10pt]
                                           {\displaystyle  - \frac{2^{\beta-\alpha+1} g(1) \sin(\alpha\pi)}{\pi
n^{2\alpha+1}}\Gamma(2\alpha+1)  + \mathcal{O}(n^{- \min\{
2\alpha+3, 2\beta+1 \}})},   & \mbox{$\alpha < \beta$}.
                                        \end{array}
                                        \right.
\end{equation}
Further, if one of $\alpha$ and $\beta$ is an integer, then
\begin{equation}\label{eq:asymptotic two}
a_n = \left\{\begin{array}{cc}
                                           {\displaystyle  (-1)^{n+1} \frac{2^{\alpha-\beta+1} g(-1) \sin(\beta\pi)}{\pi
n^{2\beta+1}}\Gamma(2\beta+1)  + \mathcal{O}(n^{- 2\beta-3 })},   &
\mbox{if $\alpha$ is an integer}, \\ [10pt]
                                           {\displaystyle  - \frac{2^{\beta-\alpha+1} g(1) \sin(\alpha\pi)}{\pi
n^{2\alpha+1}}\Gamma(2\alpha+1) + \mathcal{O}(n^{-2\alpha-3})},   &
\mbox{if $\beta$ is an integer}.
                                        \end{array}
                                        \right.
\end{equation}
\end{corollary}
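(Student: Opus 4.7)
The plan is to read off the corollary directly from the asymptotic expansion \eqref{eq:asymptotic expansion Cheby coeff} of Theorem \ref{thm:asymptotic albebraic singularities} by tracking which term is dominant and which term gives the first correction. No new analytic input is needed beyond Theorem \ref{thm:asymptotic albebraic singularities}; the work is purely one of bookkeeping among two competing asymptotic series.

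First I would write the expansion as the sum of two series, a \emph{left series} of terms of order $n^{-2\alpha-1-2k}$ coming from the endpoint $t=0$ (the singularity at $x=1$) and a \emph{right series} of terms of order $n^{-2\beta-1-2k}$ coming from $t=\pi$ (the singularity at $x=-1$), with respective prefactors $\sin(\alpha\pi)$ and $(-1)^n\sin(\beta\pi)$. Since $\hat{\psi}^{(2k+1)}(0)=\hat{\phi}^{(2k+1)}(\pi)=0$, only even-order derivatives appear, so successive terms in each series are separated by two powers of $n^{-1}$. Using the explicit values $\hat{\psi}(0)=g(1)/2^{2\alpha}$ and $\hat{\phi}(\pi)=g(-1)/2^{2\beta}$ recorded in Theorem \ref{thm:asymptotic albebraic singularities}, the two leading terms become
\[
-\,\frac{2^{\beta-\alpha+1}g(1)\sin(\alpha\pi)\Gamma(2\alpha+1)}{\pi\,n^{2\alpha+1}}
\quad\text{and}\quad
(-1)^{n+1}\frac{2^{\alpha-\beta+1}g(-1)\sin(\beta\pi)\Gamma(2\beta+1)}{\pi\,n^{2\beta+1}}.
\]

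Next I would carry out the three-way case analysis for \eqref{eq:asymptotic one} under the assumption that neither $\alpha$ nor $\beta$ is a nonnegative integer. If $\alpha>\beta$, then $n^{-2\beta-1}$ dominates $n^{-2\alpha-1}$; the right leading term is retained and the error is the larger of the left leading term ($n^{-2\alpha-1}$) and the next right term ($n^{-2\beta-3}$), i.e.\ $\mathcal{O}(n^{-\min\{2\alpha+1,\,2\beta+3\}})$. The symmetric case $\alpha<\beta$ is analogous. When $\alpha=\beta$ the two leading terms are of the same order and can be combined, noting that $2^{\alpha-\beta+1}=2^{\beta-\alpha+1}=2$; the next correction in each series is of order $n^{-2\alpha-3}$, giving the claimed combined $\mathcal{O}(n^{-2\alpha-3})$ remainder.

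For \eqref{eq:asymptotic two}, I would simply observe that if $\alpha$ is a nonnegative integer then $\sin(\alpha\pi)=0$, so the entire left series vanishes term by term; only the right series survives, its leading term is unchanged and its first correction is the $k=1$ contribution of order $n^{-2\beta-3}$. The case of $\beta$ an integer is symmetric. There is no real obstacle here: the main point one has to be careful about is not confusing the two ``leading" candidates in the non-integer case and correctly identifying which subleading term controls the remainder; the explicit constant in each leading term is a direct readout of the $k=0$ summand with the values of $\hat{\psi}(0)$ and $\hat{\phi}(\pi)$ substituted in.
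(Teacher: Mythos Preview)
Your proposal is correct and follows exactly the same approach as the paper: the paper's proof is simply ``It follows immediately from Theorem \ref{thm:asymptotic albebraic singularities} by taking the leading term of \eqref{eq:asymptotic expansion Cheby coeff},'' and you have just spelled out that bookkeeping in detail. Your identification of the two competing $k=0$ terms, the case split on $\alpha$ versus $\beta$, and the observation that $\sin(\alpha\pi)=0$ kills the entire left series when $\alpha$ is an integer are all exactly what the paper intends.
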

\begin{proof}
It follows immediately from Theorem \ref{thm:asymptotic albebraic
singularities} by taking the leading term of \eqref{eq:asymptotic
expansion Cheby coeff}.
\end{proof}

Having derived the leading term of the asymptotic of the Chebyshev
coefficients, we can define the parameter $s$ such that $f$ belong
to the space $X^s$. Note that our aim is to establish the rate of
convergence of Clenshaw-Curtis quadrature for the integral
$\int_{-1}^{1} f(x) dx$. Thus we restrict our attention to the case
$\alpha, \beta \geq 0$.


\begin{definition}\label{def: s one}
For the function $f(x) = (1-x)^{\alpha} (1+x)^{\beta} g(x)$ with
$\alpha, \beta \geq 0$ are not integers simultaneously and $g(x) \in
C^{\infty}[-1,1]$. Define
\begin{equation}\label{eq:class s}
s = \left\{\begin{array}{ccc}
                                           {\displaystyle  2\min\{ \alpha, \beta \} },   & \mbox{if $\alpha,\beta$ are not integers}, \\ [10pt]
                                           {\displaystyle  2\alpha}, & \mbox{if $\beta$ is an integer}, \\ [10pt]
                                           {\displaystyle  2\beta},  & \mbox{if $\alpha$ is an integer}.
                                        \end{array}
                                        \right.
\end{equation}
Then, from equations \eqref{eq:asymptotic one} and
\eqref{eq:asymptotic two} we can deduce immediately that $f \in
X^s$.
\end{definition}

\begin{theorem}\label{thm:rate algebraic endpoint}
Let $f(x)$ satisfy the assumptions as in Definition \ref{def: s
one}. Then, the rate of convergence of $(n+1)$-point Clenshaw-Curtis
quadrature for the integral $\int_{-1}^{1} f(x) dx$ is
\begin{equation}
E_n^{C}[f] = \mathcal{O}(n^{-s-2}),
\end{equation}
where $s$ is defined as in \eqref{eq:class s}.
\end{theorem}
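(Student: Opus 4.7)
The plan is to deduce Theorem~\ref{thm:rate algebraic endpoint} from Theorem~\ref{thm:superconvergence of cc} (together with Remark~\ref{eq:superconv of cc additional term}) by feeding into it the explicit Chebyshev asymptotics obtained in Theorem~\ref{thm:asymptotic albebraic singularities} and its corollary. Once the leading (and, in the tricky case, subleading) term of $a_n$ is identified for each branch of Definition~\ref{def: s one}, the hypotheses \eqref{eq:cheb coefficients constant sign}/\eqref{eq:cheb coefficients alternate sign} are essentially read off, and the bound $E_n^{C}[f]=\mathcal{O}(n^{-s-2})$ follows directly. Since $f\in X^s$ is already guaranteed by \eqref{eq:asymptotic one}--\eqref{eq:asymptotic two} with the $s$ prescribed in \eqref{eq:class s}, nothing more is needed beyond a careful case check.

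I would split the verification into three situations mirroring Definition~\ref{def: s one}. \emph{Case 1:} exactly one of $\alpha,\beta$ is a nonnegative integer. Then \eqref{eq:asymptotic two} shows $a_n=c(s)/n^{s+1}+\mathcal{O}(n^{-s-3})$ (with constant sign when $\beta\in\mathbb{Z}_{\ge 0}$ and $s=2\alpha$, alternating sign when $\alpha\in\mathbb{Z}_{\ge 0}$ and $s=2\beta$); Theorem~\ref{thm:superconvergence of cc} applies immediately. \emph{Case 2:} both $\alpha,\beta$ noninteger with $\alpha=\beta$, so $s=2\alpha$. Here the middle line of \eqref{eq:asymptotic one} gives $a_n=c_1/n^{s+1}+(-1)^n c_2/n^{s+1}+\mathcal{O}(n^{-s-3})$; I would use linearity of $E_n^{C}$ to split the Chebyshev series into its constant-sign and alternating-sign parts, apply Theorem~\ref{thm:superconvergence of cc} separately to each, and add. \emph{Case 3:} both noninteger with $\alpha\ne\beta$, say $\alpha<\beta$ so $s=2\alpha$; symmetric otherwise. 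The leading term from the $\alpha$-endpoint fits \eqref{eq:cheb coefficients constant sign}, and the full expansion \eqref{eq:asymptotic expansion Cheby coeff} tells us that the only other terms through order $n^{-s-2}$ are an $\mathcal{O}(n^{-s-3})$ contribution from the same endpoint and a $\beta$-endpoint term of the form $\pm(-1)^n c/n^{2\beta+1}$.

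The one place that requires a little thought, and is in my view the main obstacle, is the subcase of Case~3 with $0<\beta-\alpha<\tfrac12$: then $s+1<2\beta+1<s+2$, so the $\beta$-endpoint term is \emph{not} absorbed into the $\mathcal{O}(n^{-s-2})$ remainder of \eqref{eq:cheb coefficients constant sign}. This is exactly what Remark~\ref{eq:superconv of cc additional term} was designed for: setting $\mu=2(\beta-\alpha)\in(0,1)$, the additional term is of the permitted intermediate-order form and contributes $\mathcal{O}(n^{-s-2-\mu})$ to the quadrature error, which is still within the claimed bound. If instead $\beta-\alpha\ge\tfrac12$, that $\beta$-term satisfies $2\beta+1\ge s+2$ and is automatically swallowed by the $\mathcal{O}(m^{-s-2})$ remainder, so Theorem~\ref{thm:superconvergence of cc} applies without invoking the remark. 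Combining all cases yields $E_n^{C}[f]=\mathcal{O}(n^{-s-2})$.
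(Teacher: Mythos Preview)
Your proposal is correct and follows essentially the same approach as the paper: case-split according to Definition~\ref{def: s one}, read off the Chebyshev asymptotics from \eqref{eq:asymptotic one}--\eqref{eq:asymptotic two} (and the full expansion \eqref{eq:asymptotic expansion Cheby coeff} when needed), and invoke Theorem~\ref{thm:superconvergence of cc} directly or via Remark~\ref{eq:superconv of cc additional term} for the intermediate-order cross term in the unequal-noninteger case. Your threshold $|\alpha-\beta|<\tfrac12$ in Case~3 is in fact the natural one given the constraint $0<\mu<1$ in the Remark---the paper uses the coarser $|\alpha-\beta|<1$, which is harmless but slightly overshoots---and your explicit linearity splitting in Case~2 spells out what the paper leaves implicit, but the overall structure is identical.
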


\begin{proof}
If one of $\alpha$ and $\beta$ is an integer, then we observe from
equation \eqref{eq:asymptotic two} that the Chebyshev coefficients
satisfy the condition of Theorem \ref{thm:superconvergence of cc},
therefore the desired result holds. If neither $\alpha$ nor $\beta$
is a nonegative integer, then the desired result holds when $\alpha
= \beta$ due to the second equation of \eqref{eq:asymptotic one}. We
now consider the case $\alpha>\beta$: if $\alpha \geq \beta + 1$,
then the desired result follows by noting the first equation of
\eqref{eq:asymptotic one}. If $\beta < \alpha < \beta + 1$, using
Theorem \ref{thm:asymptotic albebraic singularities} we find that
\begin{align*}
a_n & =  (-1)^{n+1} \frac{2^{\alpha-\beta+1} g(-1)
\sin(\beta\pi)}{\pi n^{2\beta+1}}\Gamma(2\beta+1) \\
&~~~~~~~~~~ - \frac{2^{\beta-\alpha+1} g(1) \sin(\alpha\pi)}{\pi
n^{2\alpha+1}}\Gamma(2\alpha+1)  + \mathcal{O}(n^{-2\beta-3}).
\end{align*}
This together with Remark \ref{eq:superconv of cc additional term}
gives the desired result. Thus, the proof is completed since the
argument in the case $\alpha < \beta$ is similar.
\end{proof}

\begin{corollary}
For functions of the form $f(x) = (1 \pm x)^{\alpha} g(x)$, where
$\alpha > 0$ is not an integer and $g(x) \in C^{\infty}[-1,1]$. From
Theorem \ref{thm:rate algebraic endpoint}, we see that the
convergence rate of Clenshaw-Curtis quadrature is $E_n^{C}[f] =
\mathcal{O}(n^{-2\alpha-2})$, which is the same as that of
Gauss-Legendre quadrature.
\end{corollary}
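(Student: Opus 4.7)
The plan is to recognize $f(x)=(1\pm x)^{\alpha} g(x)$ as a degenerate case of the two-endpoint template $(1-x)^{\alpha}(1+x)^{\beta}g(x)$ analyzed in Theorem \ref{thm:rate algebraic endpoint}, and then simply read off the exponent $s$ from Definition \ref{def: s one}.

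First, I would observe that the function $f(x)=(1-x)^{\alpha} g(x)$ fits the framework of Definition \ref{def: s one} with the roles $\alpha \mapsto \alpha$ and $\beta \mapsto 0$. Since $\alpha>0$ is not an integer while $\beta=0$ is, the hypothesis that $\alpha,\beta$ are not integers simultaneously is satisfied, and we fall into the third branch of \eqref{eq:class s}, namely the case ``$\beta$ is an integer'', which yields $s=2\alpha$. (The argument for $f(x)=(1+x)^{\alpha} g(x)$ is symmetric: swap the endpoints by the change of variables $x\mapsto -x$, which preserves both $X^{s}$ membership and Clenshaw--Curtis quadrature because the node set \eqref{eq:chebyshev points} is symmetric about the origin.)

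With $s=2\alpha$ identified, I would invoke Theorem \ref{thm:rate algebraic endpoint} directly to conclude
\begin{equation*}
E_n^{C}[f]=\mathcal{O}(n^{-s-2})=\mathcal{O}(n^{-2\alpha-2}),
\end{equation*}
as claimed. The comparison with Gauss--Legendre quadrature is just a restatement of the classical result of Rabinowitz and Lubinsky--Rabinowitz \cite{rabinowitz1968gauss,lubinsky1984gauss,rabinowitz1986gauss} recalled in the introduction, so no further argument is needed.

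There is no real obstacle here: the corollary is a direct specialization of Theorem \ref{thm:rate algebraic endpoint}, and the only thing worth spelling out is that the one-sided singularity case does not escape the theorem's hypotheses---it is handled cleanly by letting the vanishing exponent play the role of ``the integer one'' in Definition \ref{def: s one}.
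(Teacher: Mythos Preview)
Your proposal is correct and is exactly the intended argument: the paper offers no separate proof for this corollary because it is an immediate specialization of Theorem~\ref{thm:rate algebraic endpoint} via Definition~\ref{def: s one}, precisely as you describe. One trivial slip: the case ``$\beta$ is an integer'' is the \emph{second} branch of \eqref{eq:class s}, not the third, but you invoke the right clause and draw the right conclusion $s=2\alpha$.
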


\begin{remark}
Not all functions with algebraic endpoint singularities can be
expressed in terms of the form $f(x) = (1 - x)^{\alpha}
(1+x)^{\beta} g(x)$. Typical examples are $f(x) = \log( 1 +
\sin\sqrt{1 - x})$ and $f(x) = \arccos(x^{2m})$ where $m$ is a
positive integer. The latter function has square root singularities
at $x = \pm 1$. However, if we formally define $f(x) = \sqrt{1 -
x^2} g(x)$ with $g(x) = \arccos(x^{2m})/\sqrt{1 - x^2}$. It is easy
to verify that $g(x) \in C^{\infty}[-1,1]$. Thus, the result of
Theorem \ref{thm:rate algebraic endpoint} still holds for this
function; see Example \ref{example: arecos} for details.
\end{remark}

\subsection{Functions with algebraic-logarithmic singularities}
In this subsection we consider the asymptotic of the Chebyshev
coefficients for functions of the following form
\begin{equation}\label{eq:log singularity}
f(x) = (1 - x)^{\alpha} (1 + x)^{\beta} \log(1 - x) g(x),
\end{equation}
where $\alpha >0 $, $\beta > -\frac{1}{2}$ and $g(x)\in
C^{\infty}[-1,1]$.

\begin{lemma}\label{lem:asymp for logarithm oscillartory integrals}
Suppose that
\begin{align*}
f(x)=(x-a)^{\gamma}(b-x)^{\delta} \log(x-a) h(x)
\end{align*}
with $\gamma > 0, \delta > -1$ and $h(x)$ is $m$ times continuously
differentiable for $x \in [a,b]$. Define
\begin{align*}
\phi(x) = (x-a)^{\gamma} \log(x-a) h(x), \quad
\psi(x)=(b-x)^{\delta} h(x).
\end{align*}
Then for large $\lambda$,
\begin{align}
\int_{a}^{b} f(x) e^{i\lambda x} dx & =  e^{i \lambda a}
\sum_{k=0}^{m-1}\frac{\psi^{(k)}(a) e^{i \frac{\pi}{2} (k+\gamma+1)}
\Gamma(k+\gamma+1)}{\lambda^{k+\gamma+1}k!} \left(
\tilde{\psi}(k+\gamma+1) - \log\lambda + \frac{\pi}{2} i \right)
\nonumber \\
&~~~  - e^{i \lambda b} \sum_{k=0}^{m-1}\frac{\phi^{(k)}(b) e^{i
\frac{\pi}{2} (k-\delta+1)}
\Gamma(k+\delta+1)}{\lambda^{k+\delta+1}k!} \nonumber\\
&~~~~~ + \mathcal{O}( \lambda^{-m-\gamma-1} \log\lambda ) +
\mathcal{O}( \lambda^{-m-\delta-1}), \nonumber
\end{align}
where $\tilde{\psi}(x)$ is the digamma function.
\end{lemma}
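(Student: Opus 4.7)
The plan is to derive this lemma from Lemma~\ref{lem:asymp for singular oscill integrals} by parameter differentiation, exploiting the identity
\[
(x-a)^{\gamma}\log(x-a)=\frac{\partial}{\partial\gamma}(x-a)^{\gamma}.
\]
I introduce the one-parameter family
\[
I(\gamma,\lambda):=\int_{a}^{b}(x-a)^{\gamma}(b-x)^{\delta}h(x)\,e^{i\lambda x}\,dx,
\]
so that the integral in the statement equals $\partial I/\partial\gamma$. Lemma~\ref{lem:asymp for singular oscill integrals} already supplies an asymptotic expansion of $I(\gamma,\lambda)$ with the same $\psi$ as in the current statement and with an auxiliary function $\phi_{\mathrm{old}}(x)=(x-a)^{\gamma}h(x)$. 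The task therefore reduces to differentiating that expansion term by term in $\gamma$ and matching.

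For the sum localised at $x=a$, the coefficient $\psi^{(k)}(a)$ is independent of $\gamma$, and only three factors carry $\gamma$-dependence. Using
\[
\partial_{\gamma}e^{i\pi(k+\gamma+1)/2}=\tfrac{i\pi}{2}\,e^{i\pi(k+\gamma+1)/2},\qquad \partial_{\gamma}\Gamma(k+\gamma+1)=\Gamma(k+\gamma+1)\,\tilde\psi(k+\gamma+1),
\]
together with $\partial_{\gamma}\lambda^{-(k+\gamma+1)}=-(\log\lambda)\lambda^{-(k+\gamma+1)}$, one reads off exactly the bracketed factor $\tilde\psi(k+\gamma+1)-\log\lambda+\tfrac{\pi}{2}i$ appearing in the lemma. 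For the sum localised at $x=b$, commutation of $\partial_{\gamma}$ with $\partial_{x}^{k}$ gives $\partial_{\gamma}\phi_{\mathrm{old}}^{(k)}(b)=\bigl((x-a)^{\gamma}\log(x-a)h(x)\bigr)^{(k)}\big|_{x=b}=\phi^{(k)}(b)$, reproducing the second sum verbatim with no extra $\log\lambda$ or digamma correction. The remainder $\mathcal{O}(\lambda^{-m-1-\min\{\gamma,\delta\}})$ picks up an additional $\log\lambda$ at the $\gamma$-endpoint but not at the $\delta$-endpoint, producing precisely the two-part bound $\mathcal{O}(\lambda^{-m-\gamma-1}\log\lambda)+\mathcal{O}(\lambda^{-m-\delta-1})$ stated.

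The main obstacle is the rigorous justification of termwise differentiation of an asymptotic expansion in a parameter: one needs uniformity of the expansion of $I(\gamma,\lambda)$ on a small neighbourhood of the given $\gamma$ together with control of $\partial_{\gamma}$ applied to the remainder. The cleanest way to close this gap is to revisit the proof of Lemma~\ref{lem:asymp for singular oscill integrals} itself, i.e.\ either Erd\'elyi's neutralizer-plus-integration-by-parts argument or Lyness's contour-deformation argument, and track $\gamma$-dependence throughout. In both approaches the remainder is written as an explicit integral whose $\gamma$-derivative is bounded by the same quantity, penalised only by the logarithmic factor inherited from $\partial_{\gamma}\lambda^{-\gamma}=-\lambda^{-\gamma}\log\lambda$. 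Should this uniformity prove inconvenient, an alternative is to split $[a,b]$ with a smooth cutoff: away from $a$ the log factor is smooth and Lemma~\ref{lem:asymp for singular oscill integrals} applies directly, while near $a$ a Mellin-transform / Watson-lemma argument yields the $\Gamma$-residue whose $\gamma$-derivative furnishes the digamma contribution.
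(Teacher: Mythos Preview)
Your parameter-differentiation approach is correct at the formal level and is a genuinely different route from the paper's. The paper does not actually prove this lemma: it simply points to Erd\'elyi's 1956 paper on Fourier integrals with logarithmic singularities and to Lyness's contour-integration argument in the analytic case. Your proposal instead \emph{derives} the logarithmic expansion from the purely algebraic one (Lemma~\ref{lem:asymp for singular oscill integrals}) via $\partial_\gamma (x-a)^\gamma=(x-a)^\gamma\log(x-a)$, and your term-by-term differentiation of the three $\gamma$-dependent factors $e^{i\pi(k+\gamma+1)/2}$, $\Gamma(k+\gamma+1)$, $\lambda^{-(k+\gamma+1)}$ does produce exactly the bracket $\tilde\psi(k+\gamma+1)-\log\lambda+\tfrac{i\pi}{2}$, while $\partial_\gamma\phi_{\mathrm{old}}^{(k)}(b)=\phi^{(k)}(b)$ gives the $b$-endpoint sum with no extra factor. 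This is an elegant way to \emph{predict} the form of the answer and explains transparently why the digamma and the $\log\lambda$ appear together.

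That said, you have correctly flagged the only real issue: differentiating an asymptotic expansion in a parameter is not automatic, and to justify it you must show that the remainder in Lemma~\ref{lem:asymp for singular oscill integrals} is $\mathcal{O}(\lambda^{-m-1-\min\{\gamma,\delta\}})$ \emph{locally uniformly in $\gamma$} and that its $\gamma$-derivative obeys the same bound up to a $\log\lambda$. Your proposed remedy---reopening Erd\'elyi's neutralizer/IBP proof or Lyness's contour argument and tracking the $\gamma$-dependence of the explicit remainder integral---works, but note that once you do this you are effectively reproving the lemma from scratch, which is precisely what the paper's cited references do directly. So the two approaches converge at the technical level: yours buys a clean heuristic and a unified statement (the logarithmic case as the $\gamma$-derivative of the algebraic case), while the paper's buys brevity by outsourcing the analysis to Erd\'elyi's second paper, where the logarithmic endpoint is handled ab initio.
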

\begin{proof}
The idea of Erd\'{e}lyi's proof can be extended to the current
setting in a straightforward way; see \cite{erdelyi1956fourier} for
details. If $h(x)$ is analytic, the desired result can be derived by
using the technique of contour integration
\cite[Appendix]{lyness1972fourier}.
\end{proof}

Using the above Lemma, we obtain the following.
\begin{theorem}\label{thm:asymptotic logarithmic singularity}
For the function $f(x) = (1 - x)^{\alpha} (1 + x)^{\beta} \log(1 -
x) g(x)$ with $\alpha > 0$, $\beta > -\frac{1}{2}$ and $g(x) \in
C^{\infty}[-1,1]$, its Chebyshev coefficients are given
asymptotically by
\begin{align}\label{eq:left logarithm}
a_n &\sim - \frac{2^{\alpha+\beta+1}}{\pi} \sin(\alpha\pi)\sum_{s =
0}^{\infty} \frac{ (-1)^s \psi_1^{(2s)}(0)
\Gamma(2s+2\alpha+1) }{n^{2s+2\alpha+1} (2s)!} \nonumber \\
&~~~~~ -(-1)^n \frac{2^{\alpha+\beta+1}}{\pi} \sin(\beta\pi) \sum_{s
= 0}^{\infty} \frac{(-1)^s \phi_1^{(2s)}(\pi) \Gamma(2s+2\beta+1)
}{n^{2s+2\beta+1} (2s)!} \nonumber \\
&~~~~~ - \frac{2^{\alpha+\beta+2}}{\pi} \sum_{s = 0}^{\infty} \frac{
(-1)^s \hat{\psi}^{(2s)}(0)
\Gamma(2s+2\alpha+1) }{n^{2s+2\alpha+1} (2s)!} \bigg( \sin(\alpha\pi) (\tilde{\psi}(2s+2\alpha+1) - \log n )  \nonumber \\
&~~~~~ + \frac{\pi}{2}\cos(\alpha\pi) \bigg), \quad
n\rightarrow\infty,
\end{align}
where
\begin{equation}\label{def:psi and phi one}
\psi_1(t) = \hat{\psi}(t) \log\left( 2(t^{-1} \sin(t/2) )^2 \right),
\quad  \phi_1(t) = \hat{\phi}(t) \log\left( 2 (\sin(t/2))^2 \right).
\end{equation}
\end{theorem}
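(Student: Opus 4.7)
The plan is to imitate the proof of Theorem~\ref{thm:asymptotic albebraic singularities} but with the extra logarithmic factor handled by Lemma~\ref{lem:asymp for logarithm oscillartory integrals} at the left endpoint.

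First, I would perform the substitution $x = \cos t$ and use $\log(1-x) = \log(2\sin^2(t/2))$ to write
\begin{equation*}
a_n = \frac{2^{\alpha+\beta+1}}{\pi} \int_0^\pi t^{2\alpha}(\pi - t)^{2\beta} \hat g(t)\,\log\bigl(2\sin^2(t/2)\bigr) \cos(nt)\, dt,
\end{equation*}
with $\hat g$ as defined in \eqref{eq:hat g}, noting $\hat g \in C^\infty[0,\pi]$. To separate the genuine logarithmic singularity at $t=0$ from the algebraic structure, I would split
\begin{equation*}
\log\bigl(2\sin^2(t/2)\bigr) = 2\log t + \log\bigl(2(t^{-1}\sin(t/2))^2\bigr),
\end{equation*}
where the second summand is smooth on $[0,\pi]$. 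This decomposes the integral into a smooth-log piece $I_1$ and a pure $\log t$ piece $I_2$.

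Next, I would apply Lemma~\ref{lem:asymp for singular oscill integrals} (with $m = \infty$) to $I_1$: the left-endpoint contribution at $t=0$ has coefficients given by derivatives of $\psi_1(t) = (\pi - t)^{2\beta}\hat g(t)\log(2(t^{-1}\sin(t/2))^2)$, and the right-endpoint contribution at $t=\pi$ involves derivatives of $t^{2\alpha}\hat g(t)\log(2(t^{-1}\sin(t/2))^2)$. I would then apply Lemma~\ref{lem:asymp for logarithm oscillartory integrals} to $I_2$ with $\gamma = 2\alpha$, $\delta = 2\beta$, $h = \hat g$, $\lambda = n$: the left endpoint produces the log-modified terms built from $\hat\psi^{(k)}(0) = $ derivatives of $(\pi - t)^{2\beta}\hat g(t)$, multiplied by the digamma correction $\tilde\psi(k+2\alpha+1) - \log n + i\pi/2$, while the right endpoint produces purely algebraic terms built from derivatives of $t^{2\alpha}\log t\,\hat g(t)$. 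Adding the right-endpoint contributions of $I_1$ and $I_2$ collapses the two logarithms back into $\log(2\sin^2(t/2))$, producing exactly $\phi_1^{(k)}(\pi)$ as in \eqref{def:psi and phi one}.

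The symmetry arguments from the proof of Theorem~\ref{thm:asymptotic albebraic singularities} carry over verbatim: $\psi_1(-t) = \psi_1(t)$ and $\phi_1(\pi+s) = \phi_1(\pi-s)$ since the smooth log factors $\log(2(t^{-1}\sin(t/2))^2)$ and $\log(2\sin^2(t/2))$ are even about $t=0$ and $t=\pi$ respectively, killing every odd derivative. Finally, I would take real parts of the $e^{in\cdot 0}$ and $e^{in\pi}$ contributions. The factor $e^{i\pi(2s+2\alpha+1)/2}(\tilde\psi(2s+2\alpha+1) - \log n + i\pi/2)$ appearing in Lemma~\ref{lem:asymp for logarithm oscillartory integrals} has real part
\begin{equation*}
(-1)^{s+1}\bigl[\sin(\alpha\pi)(\tilde\psi(2s+2\alpha+1) - \log n) + \tfrac{\pi}{2}\cos(\alpha\pi)\bigr],
\end{equation*}
which produces the third sum in \eqref{eq:left logarithm}; the real parts of the algebraic contributions reproduce the first two sums, with $(-1)^n$ arising from $e^{in\pi}$.

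The main technical obstacle will be step four: keeping track of all real/imaginary parts of the digamma factor and of the phase factor $e^{i\pi(k+\gamma+1)/2}$, and verifying that after recombination only $\sin(\alpha\pi)$ and $\cos(\alpha\pi)$ (and no stray $\cos(\beta\pi)$) appear in the logarithmic correction. This requires care because the log correction occurs \emph{only} at the left endpoint where the log singularity sits; the right-endpoint reshuffling must leave $\phi_1$ exactly of the form stated in \eqref{def:psi and phi one}, without residual $\log n$ terms.
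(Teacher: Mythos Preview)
Your proposal is correct and follows essentially the same route as the paper: the same substitution $x=\cos t$, the same splitting $\log(2\sin^2(t/2)) = 2\log t + \log\bigl(2(t^{-1}\sin(t/2))^2\bigr)$, application of Lemma~\ref{lem:asymp for singular oscill integrals} to the smooth piece and Lemma~\ref{lem:asymp for logarithm oscillartory integrals} to the $\log t$ piece, and the same recombination $\phi_1 = \phi_2 + 2\phi_3$ at $t=\pi$. The only cosmetic difference is that you invoke the evenness of $\phi_1$ about $t=\pi$ directly, whereas the paper verifies $\phi_1^{(2k+1)}(\pi)=0$ via the Leibniz rule; both arguments are equivalent.
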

\begin{proof} The idea is similar to the proof of Theorem \ref{thm:asymptotic albebraic singularities}. The change of variable $x = \cos
t$ results in
\begin{align}
a_n &= \frac{2}{\pi} \int_{0}^{\pi} f(\cos t) \cos(nt) dt \nonumber
\\
&= \frac{2}{\pi} \int_{0}^{\pi} (1 - \cos t)^{\alpha} (1 + \cos
t)^{\beta} \log(1 - \cos t) g(\cos t) \cos(nt) dt \nonumber
\\
& = \frac{2^{\alpha+\beta+2}}{\pi} \int_{0}^{\pi} t^{2\alpha} (\pi -
t)^{2\beta} \log(t) \hat{g}(t) \cos(nt) dt \nonumber \\
&~~~~~~~~~~~ + \frac{2^{\alpha+\beta+1}}{\pi} \int_{0}^{\pi}
t^{2\alpha} (\pi - t)^{2\beta} \tilde{g}(t) \cos(nt) dt,
\end{align}
where $\hat{g}(t)$ is defined as in \eqref{eq:hat g} and
\begin{equation}
\tilde{g}(t) = \hat{g}(t) \log\left( 2(t^{-1} \sin(t/2) )^2 \right),
\end{equation}
which is infinitely differentiable on $[0,\pi]$. Note that
$\psi_1(t)$ defined in \eqref{def:psi and phi one} is an even
function, we have
\begin{equation}
\psi_1^{(2k+1)}(0) = 0, \quad k \geq 0.
\end{equation}
This together with Lemmas \ref{lem:asymp for logarithm oscillartory
integrals} and \ref{lem:asymp for singular oscill integrals} gives
\begin{align}\label{eq:asymptotic algebraic logarithm}
a_n &\sim \frac{2^{\alpha+\beta+1}}{\pi} \bigg\{ -
\sin(\alpha\pi)\sum_{k = 0}^{\infty} \frac{ (-1)^k \psi_1^{(2k)}(0)
\Gamma(2k+2\alpha+1) }{n^{2k+2\alpha+1} (2k)!} \nonumber \\
&~~~~~ -(-1)^n \sum_{k = 0}^{\infty} \frac{ \phi_2^{(k)}(\pi)
\Gamma(k+2\beta+1) }{n^{k+2\beta+1} k!} \sin\left( \beta\pi -
\frac{k}{2}\pi \right) \bigg\} \nonumber \\
&~~~~~ + \frac{2^{\alpha+\beta+2}}{\pi} \bigg\{ -  \sum_{k =
0}^{\infty} \frac{ (-1)^k \hat{\psi}^{(2k)}(0)
\Gamma(2k+2\alpha+1) }{n^{2k+2\alpha+1} (2k)!} \bigg( \sin(\alpha\pi) (\tilde{\psi}(2k+2\alpha+1) - \log n )  \nonumber \\
&~~~~~ + \frac{\pi}{2}\cos(\alpha\pi) \bigg) -(-1)^n \sum_{k =
0}^{\infty} \frac{\phi_3^{(k)}(\pi) \Gamma(k+2\beta+1)
}{n^{k+2\beta+1} k!} \sin\left(\beta\pi - \frac{k}{2}\pi \right)
\bigg\},
\end{align}
where
\begin{align}
\phi_2(t) = \hat{\phi}(t) \log\left( 2(t^{-1} \sin(t/2) )^2 \right),
\quad \phi_3(t) = \hat{\phi}(t) \log t.
\end{align}
By \eqref{def:psi and phi one}, we get $\phi_1(t) = \phi_2(t) + 2
\phi_3(t)$. On the other hand, for $k \geq 0$, we have
\begin{align*}
\phi_1^{(2k+1)}(\pi) &= \left( \hat{\phi}(t)
\log\left( 2 (\sin(t/2))^2 \right) \right)^{(2k+1)}_{t = \pi} \\
& = \sum_{j=0}^{2k+1} \binom{2k+1}{j} \hat{\phi}^{(j)}(\pi) \left(
\log\left( 2 (\sin(t/2))^2 \right) \right)^{(2k+1-j)}_{t = \pi} \\
& = 0,
\end{align*}
where we have used the fact that
\[
\hat{\phi}^{(2j+1)}(\pi) = 0, \quad  \left( \log\left( 2
(\sin(t/2))^2 \right) \right)^{(2j+1)}(\pi) = 0, \quad j \geq 0.
\]
This together with the second and the last sums on the right hand
side of \eqref{eq:asymptotic algebraic logarithm} gives the desired
result. This completes the proof.
\end{proof}

\begin{corollary}
Under the same assumptions as in Theorem \ref{thm:asymptotic
logarithmic singularity}. If $\alpha$ is a positive integer and
$\beta$ is a nonegative integer, then
\begin{align}\label{eq:logarithm one}
a_n &\sim  - 2^{\alpha+\beta+1} \cos(\alpha\pi)  \sum_{k =
0}^{\infty} \frac{ (-1)^k \hat{\psi}^{(2k)}(0) \Gamma(2k+2\alpha+1)
}{n^{2k+2\alpha+1} (2k)!}.
\end{align}
If $\alpha$ is a positive integer and $\beta$ is not a nonnegative
integer, then
\begin{equation}\label{eq:logarithm two}
a_n = \left\{\begin{array}{ccc}
                                           {\displaystyle  \frac{ (-1)^{n+1} 2^{\alpha-\beta+1} g(-1)
\sin(\beta\pi)}{\pi n^{2\beta+1}}\Gamma(2\beta+1) \log2 +
\mathcal{O}(n^{- \min\{ 2\alpha+1, 2\beta+3 \}}) }, & \mbox{$\alpha > \beta$}, \\
[10pt]
                                           {\displaystyle  - \frac{2^{\beta-\alpha+1} g(1) \cos(\alpha\pi)}{
n^{2\alpha+1}}\Gamma(2\alpha+1)  + \mathcal{O}(n^{- \min\{
2\alpha+3, 2\beta+1 \}})},   & \mbox{$\alpha < \beta$}.
                                        \end{array}
                                        \right.
\end{equation}
\end{corollary}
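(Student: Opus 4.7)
The plan is to derive both cases as direct specializations of the asymptotic expansion \eqref{eq:left logarithm} in Theorem \ref{thm:asymptotic logarithmic singularity}. The integer hypotheses cause several trigonometric factors to vanish, and the corollary essentially consists of reading off what remains. No new analytic machinery is required; the work is bookkeeping and evaluation of the coefficients $\hat{\psi}(0)$ and $\phi_1(\pi)$ at the endpoints.

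For case 1, when $\alpha$ is a positive integer and $\beta$ is a nonnegative integer, I would substitute $\sin(\alpha\pi)=0$ and $\sin(\beta\pi)=0$ into \eqref{eq:left logarithm}. This immediately annihilates the first sum (carrying $\psi_1$) and the second sum (the $(-1)^n$ sum carrying $\phi_1$), and inside the third sum it kills the $\tilde{\psi}(2s+2\alpha+1)-\log n$ contribution. What survives is exactly
\[
a_n \sim -\frac{2^{\alpha+\beta+2}}{\pi}\cdot\frac{\pi}{2}\cos(\alpha\pi)\sum_{s=0}^{\infty}\frac{(-1)^s\hat{\psi}^{(2s)}(0)\Gamma(2s+2\alpha+1)}{n^{2s+2\alpha+1}(2s)!},
\]
which matches \eqref{eq:logarithm one} after collecting the prefactor $2^{\alpha+\beta+1}\cos(\alpha\pi)$.

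For case 2, with $\alpha$ a positive integer but $\beta\notin\mathbb{Z}_{\geq 0}$, only $\sin(\alpha\pi)=0$ is available, so the first sum and the $(\tilde{\psi}-\log n)$ piece of the third sum drop out, but the $(-1)^n$ sum in $\phi_1$ and the $\cos(\alpha\pi)$ piece of the third sum both survive. Extracting the $s=0$ term of each using the explicit endpoint values $\hat{\psi}(0)=g(1)/2^{2\alpha}$ (from Theorem \ref{thm:asymptotic albebraic singularities}) and $\phi_1(\pi)=\hat{\phi}(\pi)\log 2 = 2^{-2\beta}g(-1)\log 2$ (since $\log(2\sin^2(t/2))$ equals $\log 2$ at $t=\pi$), yields two candidate leading terms of orders $n^{-2\beta-1}$ and $n^{-2\alpha-1}$. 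Comparing them according to whether $\alpha>\beta$ or $\alpha<\beta$ (and noting that the next correction to the surviving sum lies at $n^{-2\beta-3}$ or $n^{-2\alpha-3}$, respectively) produces the two branches of \eqref{eq:logarithm two} with the stated error terms.

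The only genuinely delicate point is the evaluation of $\phi_1(\pi)$: one must use that $\phi_1(t)=\hat{\phi}(t)\log(2\sin^2(t/2))$ is regular at $t=\pi$ even though the companion function $\phi_3(t)=\hat{\phi}(t)\log t$ is not the relevant object here, and correctly recognize that $\sin(\pi/2)=1$ so that no $\log$-singularity appears at the right endpoint. Beyond that, the argument is pure simplification of \eqref{eq:left logarithm}, so no substantial obstacle is expected.
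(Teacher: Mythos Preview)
Your proposal is correct and follows exactly the approach the paper intends: the paper's own proof is the single line ``It follows directly from Theorem \ref{thm:asymptotic logarithmic singularity},'' and your argument simply spells out that specialization---setting $\sin(\alpha\pi)=0$ (and, in case~1, also $\sin(\beta\pi)=0$) in \eqref{eq:left logarithm}, then reading off the surviving terms with the endpoint values $\hat{\psi}(0)=g(1)/2^{2\alpha}$ and $\phi_1(\pi)=2^{-2\beta}g(-1)\log 2$. The bookkeeping of the leading orders and error terms in case~2 is handled correctly.
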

\begin{proof}
It follows directly from Theorem \ref{thm:asymptotic logarithmic
singularity}.
\end{proof}

Again, we define the parameter $s$ such that $f \in X^s$. Meanwhile,
we restrict our attention to the case $\alpha > 0$ and $\beta \geq
0$ since our aim is to derive the optimal rate of convergence of
Clenshaw-Curtis quadrature for the integral $\int_{-1}^{1} f(x) dx$.
\begin{definition}\label{def: s two}
For the function $f(x) = (1 - x)^{\alpha} (1 + x)^{\beta} \log(1 -
x) g(x)$ with $\alpha$ a positive integer and $\beta \geq 0$ and
$g(x) \in C^{\infty}[-1,1]$. Define
\begin{equation}\label{def:s logarithm}
s = \left\{\begin{array}{ccc}
                                           {\displaystyle  2\alpha}, & \mbox{if $\beta$ is an integer}, \\ [10pt]
                                           {\displaystyle  2 \min\{\alpha, \beta \} },  & \mbox{otherwise}.
                                        \end{array}
                                        \right.
\end{equation}
From the above corollary we see that $f \in X^s$.
\end{definition}
\begin{theorem}
Let $f(x)$ satisfy the assumptions as in Definition \ref{def: s
two}. Then, the rate of convergence of $(n+1)$-point Clenshaw-Curtis
quadrature for the integral $\int_{-1}^{1} f(x) dx$ is
\begin{equation}
E_n^{C}[f] = \mathcal{O}(n^{-s-2}),
\end{equation}
where $s$ is defined as in \eqref{def:s logarithm}.
\end{theorem}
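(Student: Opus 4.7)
The plan is to reduce the theorem to Theorem \ref{thm:superconvergence of cc}, together with Remark \ref{eq:superconv of cc additional term}, by feeding in the asymptotic of $a_n$ obtained in Theorem \ref{thm:asymptotic logarithmic singularity} and the subsequent corollary. Since $\alpha$ is a positive integer, $\sin(\alpha\pi)=0$ and $\cos(\alpha\pi)=(-1)^\alpha\neq 0$; thus the non-log pieces of the left-endpoint expansion in Theorem \ref{thm:asymptotic logarithmic singularity} vanish, and only the $\cos(\alpha\pi)$-term (carrying the logarithm) plus the right-endpoint $\sin(\beta\pi)$-terms contribute. This matches exactly the structure of \eqref{eq:logarithm one}–\eqref{eq:logarithm two}.

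I would then split according to whether $\beta$ is a nonnegative integer. If $\beta$ is a nonnegative integer, then $s=2\alpha$ and \eqref{eq:logarithm one} gives a constant-sign asymptotic $a_n = c(s)/n^{s+1}+\mathcal{O}(n^{-s-3})$ (the leading coefficient is $-2^{-\alpha+\beta+1}g(1)\cos(\alpha\pi)\Gamma(2\alpha+1)/\pi$ after inserting $\hat{\psi}(0)=g(1)/2^{2\alpha}$). This is hypothesis \eqref{eq:cheb coefficients constant sign}, so Theorem \ref{thm:superconvergence of cc} yields $E_n^C[f]=\mathcal{O}(n^{-s-2})$ immediately. If $\beta$ is not a nonnegative integer, then $s=2\min\{\alpha,\beta\}$; in the subcase $\alpha>\beta$ the first line of \eqref{eq:logarithm two} gives an alternating-sign leading term matching \eqref{eq:cheb coefficients alternate sign}, while in the subcase $\alpha<\beta$ the second line gives a constant-sign leading term matching \eqref{eq:cheb coefficients constant sign}. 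When $|\alpha-\beta|\ge 1$ the remainder is already $\mathcal{O}(n^{-s-3})$ and Theorem \ref{thm:superconvergence of cc} applies directly.

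The one remaining issue is the intermediate range $0<|\alpha-\beta|<1$, where the full asymptotic from Theorem \ref{thm:asymptotic logarithmic singularity} produces a secondary term of order $n^{-s-1-\mu}$ with $\mu=2|\alpha-\beta|\in(0,2)$ (of the exact form $\pm d(s)/m^{s+1+\mu}$ or $\pm(-1)^m d(s)/m^{s+1+\mu}$). Remark \ref{eq:superconv of cc additional term} is stated for $0<\mu<1$, but its proof — which just repeats the $S_2$ estimate of Theorem \ref{thm:superconvergence of cc} with $s$ replaced by $s+\mu$ — goes through verbatim for any $\mu>0$, giving a contribution of $\mathcal{O}(n^{-s-2-\mu})=o(n^{-s-2})$ to $E_n^C[f]$. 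Adding this to the leading contribution preserves the $\mathcal{O}(n^{-s-2})$ bound.

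The main obstacle, if any, is precisely this mild extension of Remark \ref{eq:superconv of cc additional term} past $\mu=1$: one should either observe that the Remark's proof has nothing to do with $\mu<1$, or alternatively iterate the expansion of Theorem \ref{thm:asymptotic logarithmic singularity} a few more terms to expose enough of the asymptotic series so that all residuals fall into the $\mathcal{O}(n^{-s-3})$ window covered cleanly by Theorem \ref{thm:superconvergence of cc}. Either route is mechanical, and the case $\alpha<\beta$ is handled identically to $\alpha>\beta$ with the roles of the two endpoints reversed.
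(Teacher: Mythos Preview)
Your proposal is correct and follows exactly the approach the paper intends: the paper's own proof is the single line ``The proof is similar to the proof of Theorem~\ref{thm:rate algebraic endpoint},'' and your case analysis (integer $\beta$; non-integer $\beta$ with $|\alpha-\beta|\ge 1$; non-integer $\beta$ with $0<|\alpha-\beta|<1$) mirrors that proof precisely, invoking Theorem~\ref{thm:superconvergence of cc} and Remark~\ref{eq:superconv of cc additional term} in the same places. One small simplification: when $\mu=2|\alpha-\beta|\ge 1$ the secondary term is already $\mathcal{O}(m^{-s-2})$ and is absorbed into the hypothesis of Theorem~\ref{thm:superconvergence of cc} directly, so the extension of Remark~\ref{eq:superconv of cc additional term} beyond $\mu<1$ is not actually needed (and your leading coefficient in the integer-$\beta$ case has a stray $1/\pi$).
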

\begin{proof}
The proof is similar to the proof of Theorem \ref{thm:rate algebraic
endpoint}.
\end{proof}

\begin{remark}
For the case that $\alpha$ is not a positive integer, from
\eqref{eq:left logarithm} we observe that there exists a factor
$\log n$ in the third summation. Therefore, it is reasonable to
expect that the convergence rate of Clenshaw-Curtis quadrature would
be slightly slower than $\mathcal{O}(n^{-s-2})$.
\end{remark}

\section{Extrapolation methods for accelerating Clenshaw-Curtis quadrature}\label{sec:acceleration}
Comparing with Gauss quadrature, an essential feature of
Clenshaw-Curtis quadrature is that its quadrature nodes are nested.
This implies that the previous function values can be stored and
reused when the number of quadrature nodes is doubled. Therefore,
Clenshaw-Curtis quadrature is a particularly ideal candidate for
implementing an automatic quadrature rule in practical computations.
In this section, we shall extend our analysis in Section 2 and show
that it is possible to accelerate the rate of convergence of
Clenshaw-Curtis quadrature for functions with endpoint
singularities.

Asymptotic expansion of the error of Gauss-Legendre quadrature for
functions with endpoint singularities has been investigated in
\cite{sidi2009gauss,verlinden1997gauss}. For example, when $f(x) = (
1 - x)^{\alpha} g(x)$ with $\Re(\alpha) > -1$ and $g(x)$ is analytic
in a region containing the interval $[-1,1]$, Verlinden proved that
the error of the $n$-point Gauss-Legendre quadrature admits the
following asymptotic expansion \cite[Thm.~1]{verlinden1997gauss}
\begin{equation}
E_{n}^{G}[f] \sim \sum_{k = 1}^{\infty} c_k h^{k+\alpha}, \quad n
\rightarrow\infty,
\end{equation}
where $h = (n + 1/2)^{-2}$ and $c_k$ are constants independent of
$n$. Furthermore, some extrapolation schemes were proposed to
accelerate the convergence of Gauss quadrature. Even Verlinden's
results reveal an important connection between Gauss quadrature and
extrapolation schemes. However, accelerating the Gauss quadrature is
expensive since its quadrature nodes are completely distinct if $n$
is changed.

In the following, we shall show that the error of Clenshaw-Curtis
quadrature also admits a similar expansion. This together with the
nested property of Clenshaw-Curtis points implies that
Clenshaw-Curtis quadrature is more advantageous than its
Gauss-Legendre counterpart. Since the Clenshaw-Curtis points are
nested when $n$ is doubled, we restrict our attention to the case of
even $n$.

\begin{theorem}\label{thm:expansion error clenshaw}
If the Chebyshev coefficients of $f(x)$ satisfy
\begin{equation}\label{eq:asymp expan cheby coeff 1}
a_n \sim \sum_{k = 0}^{\infty} \frac{\mu_k}{ n^{d_k} }, \quad
n\rightarrow\infty,
\end{equation}
or
\begin{equation}\label{eq:asymp expan cheby coeff 2}
a_n \sim (-1)^n \sum_{k = 0}^{\infty} \frac{\mu_k}{ n^{d_k} }, \quad
n\rightarrow\infty,
\end{equation}
where $\mu_k$ are constants independent of $n$ and $0 < d_0 < d_1 <
\cdots$. Then, for even $n$, the error of the Clenshaw-Curtis
quadrature can be expanded as
\begin{equation}\label{eq:expansion error clenshaw}
E_{n}^{C}[f] \sim \sum_{k = 0}^{\infty}  \sum_{j=0}^{\infty}
\frac{\varsigma_{j,k} }{n^{d_k+2j+1}},
\end{equation}
where $\varsigma_{j,k}$ are constants independent of $n$.
\end{theorem}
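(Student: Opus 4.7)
\smallskip

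The plan is to follow the same decomposition $E_n^C[f]=S_1+S_2$ introduced in the proof of Theorem \ref{thm:superconvergence of cc} and to carry the expansions of $S_1$ and $S_2$ to all orders instead of stopping at the leading term. A useful preliminary observation is that every $m\in\Delta(n)$ has the form $m=2jn+2r$, which is always even, so $(-1)^m=1$ on $\Delta(n)$ whenever $n$ is even. Consequently the hypotheses \eqref{eq:asymp expan cheby coeff 1} and \eqref{eq:asymp expan cheby coeff 2} give identical sums in the aliasing decomposition, and it suffices to treat the first case.

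First I would substitute \eqref{eq:asymp expan cheby coeff 1} term-by-term into
\[
S_1=\sum_{m\in\Delta(n)}\frac{2a_m}{1-m^2},\qquad S_2=\sum_{m\in\Delta(n)}\frac{2a_m}{4r^2-1}.
\]
For $S_1$, expand $1/(1-m^2)=-\sum_{\ell\ge 0} m^{-2\ell-2}$, so that
\[
S_1\sim -2\sum_{k\ge 0}\sum_{\ell\ge 0}\mu_k\sum_{m\in\Delta(n)}m^{-d_k-2\ell-2}.
\]
Writing $m=2jn+2r$ and using the binomial series $(1+r/(jn))^{-p}=\sum_{q\ge 0}\binom{-p}{q}(r/(jn))^q$ together with Faulhaber's formula for $\sum r^q$ over the symmetric range $1-n/2\le r\le n/2$ (splitting into odd and even $q$), each inner sum becomes a polynomial in $n$ of degree at most $q+1$. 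Summing over $j$ using $\sum_{j\ge 1}j^{-\sigma}=\zeta(\sigma)$ then produces an expansion of $\sum_{m\in\Delta(n)}m^{-p}$ in powers $n^{1-p},n^{-1-p},n^{-3-p},\ldots$ (only odd shifts survive after the parity analysis), which contributes to $S_1$ exactly the required powers $n^{-d_k-2j-1}$.

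For $S_2$ I would repeat, mutatis mutandis, the chain of identities used in the proof of Theorem \ref{thm:superconvergence of cc}: substitute $a_m\sim\sum_k \mu_k m^{-d_k}$, use the binomial expansion of $(1+r/(jn))^{-d_k}$, apply Lemma \ref{lem:estimate H} to the inner sums of the form $\sum_{k=1}^{n/2}k^{2q}/(4k^2-1)$, and finally sum over $j$ against $\zeta(d_k+\cdots)$ and $(2^{d_k+\cdots}-1)\zeta(d_k+\cdots)$. The explicit structure of $\nu_j^k$ given by Lemma \ref{lem:estimate H}—namely that $\nu_{2j}^k=2^{-2j-1}$ are independent of $k$ while the odd-indexed coefficients involve Bernoulli numbers—is what guarantees, after rearrangement, that only odd-shifted powers $n^{-d_k-2j-1}$ survive, mirroring the cancellation that produced the $\mathcal{O}(n^{-s-2})$ improvement in Theorem \ref{thm:superconvergence of cc}. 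Collecting contributions of $S_1$ and $S_2$ and relabeling gives \eqref{eq:expansion error clenshaw} with coefficients $\varsigma_{j,k}$ that can in principle be written in closed form in terms of $\mu_k$, $\zeta$-values and $\nu_j^k$.

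The main obstacle is purely bookkeeping: showing that after the double expansion (binomial in $r/(jn)$ and geometric in $1/m^2$) and the summation via Lemma \ref{lem:estimate H}, every power of $n$ that appears is of the form $n^{-d_k-2j-1}$ with $j\ge 0$, and that even powers $n^{-d_k-2j}$ as well as the potential $n^{-d_k}$ leading contribution cancel. This is exactly the mechanism behind the $\mathcal{O}(n^{-s-2})$ bound in Theorem \ref{thm:superconvergence of cc}, so the cancellations at the leading level are already verified there; the remaining task is to observe that the same parity argument recurs at every order because $\sum_r r^{2q+1}$ over the symmetric range equals $(n/2)^{2q+1}$ while $\sum_r r^{2q}$ is an even polynomial in $n$ of degree $2q+1$, so that no term of even degree in $1/n$ can be produced. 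A standard asymptotic-series truncation argument then converts the formal manipulation into the genuine asymptotic statement \eqref{eq:expansion error clenshaw}.
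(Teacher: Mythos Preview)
Your proposal follows essentially the same route as the paper: the same $S_1/S_2$ split, the same use of Lemma~\ref{lem:estimate H} and the binomial expansion in $r/(jn)$ for $S_2$. Your observation that every $m\in\Delta(n)$ is even, so that hypotheses \eqref{eq:asymp expan cheby coeff 1} and \eqref{eq:asymp expan cheby coeff 2} coincide on $\Delta(n)$, is a clean justification of the step the paper leaves as ``similarly''. For $S_1$ the paper takes a slightly different and tidier path: it recognizes $\sum_{m\in\Delta(n)}m^{-p}$ directly as $2^{-p}\zeta(p,n/2+1)$ and invokes the known asymptotic expansion of the Hurwitz zeta function, avoiding your binomial-plus-Faulhaber bookkeeping.

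There is, however, a genuine gap in your parity argument. You assert that $S_1$ alone yields only odd shifts $n^{-d_k-2j-1}$, but this is false. For odd $q$ the sum $\sum_r r^q$ over $1-n/2\le r\le n/2$ equals $(n/2)^q$, and after multiplying by $n^{-q}$ in your expansion this produces a contribution of exact order $n^{-p}$, i.e.\ shift zero. In the paper's Hurwitz-zeta computation this same phenomenon appears as the explicit term $1/(n^{d_k}(n^2-1))$ in \eqref{eq:asymptotic error part one}, whose geometric expansion $\sum_{i\ge1}n^{-d_k-2i}$ consists entirely of \emph{even} shifts. These even-shift terms in $S_1$ are not cancelled internally; they are cancelled by the term $-2^{d_k-1}/(n^2-1)$ that arises inside $S_2$ (see \eqref{eq:asymptotic error part two}), since $\frac{2}{(2n)^{d_k}}\cdot\bigl(-\frac{2^{d_k-1}}{n^2-1}\bigr)=-\frac{1}{n^{d_k}(n^2-1)}$. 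Only after combining $S_1$ and $S_2$ does the expansion \eqref{eq:asymptotic error expansion} contain exclusively the powers $n^{-d_k-2j-1}$. Your plan therefore needs to track this cross-cancellation explicitly rather than appeal to a within-$S_1$ parity that does not hold. (Relatedly, the phrase ``an even polynomial in $n$ of degree $2q+1$'' is self-contradictory; the correct statement is that $\sum_r r^{2q}$ is an \emph{odd} polynomial in $n$ of degree $2q+1$.)
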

\begin{proof} We only prove the case \eqref{eq:asymp expan cheby coeff
1} since the case \eqref{eq:asymp expan cheby coeff 2} can be proved
similarly. According to \eqref{eq:error of cc} and \eqref{eq:asymp
expan cheby coeff 1}, we have
\begin{align}
E_n^{C}[f] &= \sum_{m \in \Delta(n) } a_m E_n^{C}(T_m)
\nonumber \\
& \sim \sum_{m \in \Delta(n) } \left( \sum_{k = 0}^{\infty}
\frac{\mu_k}{ m^{d_k} }  \right) E_n^{C}(T_m) \nonumber \\
& = \sum_{k=0}^{\infty} \mu_k \sum_{m \in \Delta(n) }
\frac{1}{m^{d_k}} E_n^{C}(T_m).
\end{align}
Moreover, from \eqref{eq:aliasing} we have
\begin{align}
\sum_{m \in \Delta(n) } \frac{1}{m^{d_k}} E_n^{C}(T_m)  = \sum_{m
\in \Delta(n) } \frac{2}{m^{d_k} (1 - m^2) } + \sum_{m \in \Delta(n)
} \frac{2}{m^{d_k}(4r^2 - 1)}.
\end{align}
In the following we shall analyze the asymptotic of these two sums
on the right hand side of the above equation. For the first sum, it
is easy to see that
\begin{align}
\sum_{m \in \Delta(n) } \frac{2}{m^{d_k} (1 - m^2) } & = -2 \sum_{j
= 0}^{\infty} \sum_{m \in \Delta(n) } \frac{1}{m^{d_k+2j+2} }
\nonumber \\
& = \frac{2}{n^{d_k} (n^2 - 1)} - \sum_{j= 0}^{\infty}
\frac{1}{2^{d_k+2j+1}} \zeta\left(d_k+2j+2, \frac{n}{2}\right) ,
\end{align}
where $\zeta(s,a)$ is the Hurwitz zeta function. Recall the
asymptotic expansion of $\zeta(s,a)$
\cite[p.~25]{magnus1966formulas}
\begin{equation*}
\zeta(s,a) \sim  \frac{1}{(s-1)a^{s-1}} + \frac{1}{2a^{s}} +
\frac{1}{\Gamma(s)} \sum_{\ell=1}^{\infty}
\frac{B_{2\ell}}{(2\ell)!} \frac{\Gamma(s+2\ell-1)}{
a^{2\ell+s-1}},\quad a \rightarrow\infty,
\end{equation*}
it follows that
\begin{align}\label{eq:asymptotic error part one}
\sum_{m \in \Delta(n) } \frac{2}{m^{d_k} (1 - m^2) } &\sim
\frac{1}{n^{d_k} (n^2 - 1)} - \sum_{j=0}^{\infty}
\sum_{\ell=0}^{\infty} \frac{4^{\ell} B_{2\ell}
\Gamma(2\ell+2j+d_k+1) }{ (2\ell)! \Gamma(d_k+2j+2)
n^{2\ell+2j+d_k+1} }.
\end{align}
For the second sum, by means of the estimate of $S_2$ with $c(s) =
1$ and $s+1$ replaced by $d_k$, we see that
\begin{align}
\sum_{m \in \Delta(n) } \frac{2}{m^{d_k}(4r^2 - 1)}  &=
\frac{2}{(2n)^{d_k}} \sum_{j=1}^{\infty} \frac{1}{j^{d_k}} \sum_{
1-n \leq 2r \leq n} \frac{1}{ 4r^2 - 1} \left( 1 + \frac{r}{jn} \right)^{-d_k} \nonumber \\
& =  \frac{2}{(2n)^{d_k}} \bigg\{ - \left( \frac{2^{d_k}-1}{n^2-1} +
\frac{1}{n+1} \right)\zeta(d_k) \nonumber \\
&~~~ + \frac{1}{n^2-1} \sum_{j=1}^{\infty} \frac{1}{j^{d_k}}
\sum_{q=1}^{\infty} \frac{ (d_k)_{2q} }{ (2q)! (2j)^{2q} } \nonumber
\\
&~~~ + \left( \frac{n(n+2)}{n+1} - \frac{n^2}{n^2 - 1} \right)
\sum_{\ell = 1}^{\infty} \frac{(d_k)_{2\ell}
\zeta(2\ell+d_k)}{(2\ell)!
(2n)^{2\ell}} \nonumber \\
& ~~~ + \sum_{i=0}^{\infty} \frac{1}{n^{2i+1}} \sum_{j=1}^{\infty}
\frac{1}{j^{d_k}} \sum_{\ell=0}^{\infty} \frac{
\nu_{2i+1}^{i+\ell+1} (d_k)_{2\ell+2i+2} }{2^{2\ell} (2\ell+2i+2)!
j^{2\ell+2i+2} } \bigg\}. \nonumber
\end{align}
Observe that
\begin{align}
\sum_{j=1}^{\infty} \frac{1}{j^{d_k}} \sum_{q=1}^{\infty} \frac{
(d_k)_{2q} }{ (2q)! (2j)^{2q} } &= \sum_{j=1}^{\infty}
\frac{1}{j^{d_k}} \sum_{q=0}^{\infty} \frac{ (d_k)_{2q} }{ (2q)!
(2j)^{2q} } - \zeta(d_k) \nonumber \\
& = \frac{1}{2} \sum_{j=1}^{\infty} \frac{1}{j^{d_k}} \left( \left(1
+ \frac{1}{2j} \right)^{-d_k} +  \left( (1 - \frac{1}{2j}
\right)^{-d_k} \right) - \zeta(d_k) \nonumber \\
& = 2^{d_k}\zeta(d_k) - 2\zeta(d_k) - 2^{d_k - 1}.
\end{align}
Consequently,
\begin{align}\label{eq:asymptotic error part two}
\sum_{m \in \Delta(n) } \frac{2}{m^{d_k}(4r^2 - 1)}  &=
\frac{2}{(2n)^{d_k}} \bigg\{ - \left( \frac{1}{n+1} + \frac{1}{n^2 -
1} \right) \zeta(d_k) - \frac{2^{d_k - 1}}{n^2 - 1} \nonumber \\
&~~~  +  \left( \frac{n(n+2)}{n+1} - \frac{n^2}{n^2 - 1} \right)
\sum_{\ell = 1}^{\infty} \frac{(d_k)_{2\ell}
\zeta(2\ell+d_k)}{(2\ell)!
(2n)^{2\ell}} \nonumber \\
& ~~~ + \sum_{i=0}^{\infty} \frac{1}{n^{2i+1}} \sum_{j=1}^{\infty}
\frac{1}{j^{d_k}} \sum_{\ell=0}^{\infty} \frac{
\nu_{2i+1}^{i+\ell+1} (d_k)_{2\ell+2i+2} }{2^{2\ell} (2\ell+2i+2)!
j^{2\ell+2i+2} } \bigg\}.
\end{align}
Combining this with \eqref{eq:asymptotic error part one} gives
\begin{align}\label{eq:asymptotic error expansion}
\sum_{m \in \Delta(n) } \frac{1}{m^{d_k}} E_n^{C}(T_m) & \sim -
\sum_{j=0}^{\infty} \sum_{\ell=0}^{\infty} \frac{4^{\ell} B_{2\ell}
\Gamma(2\ell+2j+d_k+1) }{ (2\ell)! \Gamma(d_k+2j+2)
n^{2\ell+2j+d_k+1} } \nonumber \\
&~~~ - \frac{2^{1-d_k}}{n^{d_k}} \left( \frac{1}{n+1} + \frac{1}{n^2
- 1} \right) \zeta(d_k) \nonumber \\
&~~~ + \frac{2^{1-d_k}}{n^{d_k}} \left( \frac{n(n+2)}{n+1} -
\frac{n^2}{n^2 - 1} \right) \sum_{\ell = 1}^{\infty}
\frac{(d_k)_{2\ell} \zeta(2\ell+d_k)}{(2\ell)!
(2n)^{2\ell}} \nonumber \\
&~~~ + \frac{2^{1-d_k}}{n^{d_k}} \sum_{i=0}^{\infty}
\frac{1}{n^{2i+1}} \sum_{j=1}^{\infty} \frac{1}{j^{d_k}}
\sum_{\ell=0}^{\infty} \frac{ \nu_{2i+1}^{i+\ell+1}
(d_k)_{2\ell+2i+2} }{2^{2\ell} (2\ell+2i+2)! j^{2\ell+2i+2} }.
\end{align}
Since
\begin{equation*}
\frac{1}{n+1} + \frac{1}{n^2 - 1} = \sum_{j=0}^{\infty}
\frac{1}{n^{2j+1}}, \quad  \frac{n(n+2)}{n+1} - \frac{n^2}{n^2 - 1}
= n - \sum_{j=0}^{\infty} \frac{1}{n^{2j+1}}.
\end{equation*}
Thus, we can deduce that the asymptotic series on the right hand
side of \eqref{eq:asymptotic error expansion} consists of negative
powers of $n$ with exponents $\{ d_k + 2j + 1\}_{j=0}^{\infty}$ and
$k \geq 0$. This completes the proof.
\end{proof}

\begin{corollary}\label{cor: mixed asymptotic error}
If the Chebyshev coefficients of $f(x)$ satisfy
\begin{equation}\label{eq: mixed asymptotic}
a_n \sim  \sum_{k = 0}^{\infty} \frac{\mu_k}{ n^{d_k} } + (-1)^n
\sum_{k = 0}^{\infty} \frac{\gamma_k}{ n^{\zeta_k} }, \quad n
\rightarrow\infty,
\end{equation}
where $\mu_k, \gamma_k$ are constants independent of $n$ and $\{ d_k
\}_{k=0}^{\infty}$ and $\{ \zeta_k \}_{k=0}^{\infty}$ are positive
and strictly increasing sequences. Then, we have
\begin{equation}\label{eq:mixed error expansion clenshaw}
E_{n}^{C}[f] \sim \sum_{k = 0}^{\infty}  \sum_{j=0}^{\infty}
\frac{\varsigma_{j,k} }{n^{\xi_k+2j+1}}, \quad n \rightarrow\infty,
\end{equation}
where $\varsigma_{j,k}$ are constants independent of $n$ and $\{
\xi_k \}_{k = 0}^{\infty}$ is a strictly increasing sequence and $\{
\xi_k \}_{k=0}^{\infty} = \{ d_k \}_{k=0}^{\infty} \cup  \{ \zeta_k
\}_{k=0}^{\infty} $.
\end{corollary}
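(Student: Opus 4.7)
The plan is to derive this corollary directly from Theorem \ref{thm:expansion error clenshaw} by exploiting the linearity of the error functional $E_n^C[\cdot]$. Writing the Chebyshev expansion $f = \sum_m a_m T_m$ and splitting the coefficient sequence into its two asymptotic components, I will define two formal series corresponding respectively to the constant-sign part with exponents $\{d_k\}$ and the alternating part with exponents $\{\zeta_k\}$. Since $E_n^C[f] = \sum_{m \in \Delta(n)} a_m E_n^C(T_m)$, the error splits into two sums that I can treat independently.

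Next, I would apply Theorem \ref{thm:expansion error clenshaw} to each piece: the first piece falls under hypothesis \eqref{eq:asymp expan cheby coeff 1} and contributes an asymptotic series in negative powers of $n$ with exponents $\{d_k + 2j + 1\}_{j,k \geq 0}$, while the second piece falls under hypothesis \eqref{eq:asymp expan cheby coeff 2} and contributes a series with exponents $\{\zeta_k + 2j + 1\}_{j,k \geq 0}$. Each of these comes with constants independent of $n$, coming from the evaluations of $\zeta(d_k)$, $\zeta(\zeta_k)$, the Bernoulli numbers, and the coefficients $\nu_{2i+1}^{i+\ell+1}$ appearing in \eqref{eq:asymptotic error expansion}.

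The last step is cosmetic: merge the two exponent sets into a single strictly increasing sequence $\{\xi_k\}_{k \geq 0} = \{d_k\}_{k \geq 0} \cup \{\zeta_k\}_{k \geq 0}$ and relabel the coefficients $\varsigma_{j,k}$ accordingly. If an exponent appears in both $\{d_k\}$ and $\{\zeta_k\}$, the corresponding coefficient is the sum of the two contributions; otherwise it is just the single contribution. This yields exactly the claimed form \eqref{eq:mixed error expansion clenshaw}.

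The only subtle point is ensuring that the merging step is legitimate at the level of asymptotic series, i.e., that reordering and combining terms sharing the same power of $n$ preserves the asymptotic relation. This is routine since for any target order $n^{-N}$ only finitely many terms of each original series contribute, so the combined series is well defined in the usual Poincaré sense. I expect no real obstacle here, since the hard analytic work has already been done in the proof of Theorem \ref{thm:expansion error clenshaw}; the corollary is genuinely a linearity-and-bookkeeping statement.
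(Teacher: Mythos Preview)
Your proposal is correct and matches the paper's approach exactly: the paper's proof of this corollary is simply the one-line ``It follows from Theorem \ref{thm:expansion error clenshaw},'' and what you have written is precisely the linearity-and-bookkeeping argument that unpacks that sentence. There is nothing to add.
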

\begin{proof}
It follows from Theorem \ref{thm:expansion error clenshaw}.
\end{proof}

\begin{remark}\label{eq:d0 and s}
A direct consequence of Theorem \ref{thm:expansion error clenshaw}
is that the rate of convergence of Clenshaw-Curtis quadrature is
$\mathcal{O}(n^{-d_0-1})$. For example, if $f \in X^s$ which implies
that $d_0 = s + 1$. In this case, we can deduce immediately that the
rate of convergence of Clenshaw-Curtis quadrature is
$\mathcal{O}(n^{-s-2})$.
\end{remark}

For functions $f(x) = (1-x)^{\alpha}(1+x)^{\beta}g(x)$ with $\alpha,
\beta \geq 0$ are not integers simultaneously and $g(x) \in
C^{\infty}[-1,1]$, from Theorem \ref{thm:asymptotic albebraic
singularities} we know that their Chebyshev coefficients admit the
asymptotic of the form \eqref{eq:asymp expan cheby coeff 1} or
\eqref{eq:asymp expan cheby coeff 2} if $\beta$ or $\alpha$ is an
nonnegative integer. If both $\alpha$ and $\beta$ are not
nonnegative integers, then their Chebyshev coefficients admit the
asymptotic of the form \eqref{eq: mixed asymptotic}. Similarly, for
functions with algebraic-logarithmic singularities of the form
\eqref{eq:log singularity} with $\alpha$ a positive integer. If
$\beta$ is a nonnegative integer, then from \eqref{eq:logarithm one}
we see that the asymptotic of their Chebyshev coefficients satisfies
the form \eqref{eq:asymp expan cheby coeff 1}. If $\beta$ is not a
nonnegative integer, then the asymptotic of their Chebyshev
coefficients satisfies the form \eqref{eq: mixed asymptotic}.
Therefore, for these cases we mentioned, the error of the
Clenshaw-Curtis quadrature always has the asymptotic expansion of
the form \eqref{eq:expansion error clenshaw} or \eqref{eq:mixed
error expansion clenshaw}.

The error of the form \eqref{eq:expansion error clenshaw} or
\eqref{eq:mixed error expansion clenshaw} is especially suitable for
using some convergence acceleration techniques such as Richardson
extrapolation and $\epsilon$-algorithm to accelerate the convergence
rate of Clenshaw-Curtis quadrature. In particular, the previous
function evaluations can be reused in the process of convergence
acceleration when $n$ is doubled. In the following we only consider
the form \eqref{eq:expansion error clenshaw} since the form
\eqref{eq:mixed error expansion clenshaw} can be dealt with in a
similar way. In Algorithm $1$ we outline the main steps of the
convergence acceleration of Clenshaw-Curtis quadrature by using
Richardson extrapolation:
\begin{algorithm}\label{Richardson extrapolation}
\caption{Richardson extrapolation for Clenshaw-Curtis quadrature}
\begin{algorithmic}[1]
 \STATE {\mbox{Input parameters $n$ and $q$}}
 \FOR{$k=0:q$}
        \STATE {\mbox{Compute $R(0,2^kn) = I_{2^kn}^{C}[f]$ by FFT;}}
 \ENDFOR
 \FOR{$j = 0: q-1 $}
     \FOR{ $ k = 0:q-1-j $}
        \STATE { \mbox{ Evaluate
                ${\displaystyle R(j+1, 2^{k}n) = \frac{ 2^{d_j + 1} R(j, 2^{k+1}n) - R(j,2^{k}n) }{2^{d_j + 1} -
                1}; }$ } }
     \ENDFOR
 \ENDFOR
 \STATE{\mbox{Return $R(q,n)$. }}
\end{algorithmic}
\end{algorithm}

The term $R(q,n)$ achieves a higher order of convergence. More
precisely, from the standard theory of Richardson extrapolation we
have the following estimate
\begin{equation}
I[f] - R(q,n) = \mathcal{O}(n^{-d_q-1}) .
\end{equation}
Note that the Richardson extrapolation scheme $R(q,n)$ reduces to
Clenshaw-Curtis quadrature when $q = 0$.

\begin{corollary}\label{cor: order by exrapolation}
When using the Algorithm 1, the sequence $\{ d_k \}_{k=0}^{\infty}$
can be defined as follows: For functions $f(x) =
(1-x)^{\alpha}(1+x)^{\beta}g(x)$ with $\alpha, \beta \geq 0$ are not
integers simultaneously and $g(x) \in C^{\infty}[-1,1]$, we can
define
\begin{align}\label{eq:order algebraic singularities}
\{ d_k \}_{k=0}^{\infty} =  \left\{\begin{array}{ccc}
                                           {\displaystyle \{ 2\alpha+2j+1 \}_{j = 0}^{\infty} \cup \{ 2\beta+2j+1 \}_{j =
0}^{\infty} },   & \mbox{if $\alpha,\beta$ are not integers}, \\
[10pt]
                                           {\displaystyle  \{ 2\alpha+2j+1 \}_{j = 0}^{\infty} }, & \mbox{if $\beta$ is an integer}, \\ [10pt]
                                           {\displaystyle  \{ 2\beta+2j+1 \}_{j =
0}^{\infty}  },  & \mbox{if $\alpha$ is an integer}.
                                        \end{array}
                                        \right.
\end{align}
For functions $f(x) = (1 - x)^{\alpha} (1 + x)^{\beta} \log(1 - x)
g(x)$ where $\alpha$ is a positive integer, $\beta \geq 0$ and $g(x)
\in C^{\infty}[-1,1]$, then we can define
\begin{align}\label{eq:order logarithmic singularities}
\{ d_k \}_{k=0}^{\infty} = \left\{\begin{array}{ccc}
                                           {\displaystyle   \{ 2\alpha+2j+1 \}_{j = 0}^{\infty} }, & \mbox{if $\beta$ is an
integer}, \\ [10pt]
                                           {\displaystyle   \{ 2\alpha+2j+1 \}_{j = 0}^{\infty} \cup \{ 2\beta+2j+1 \}_{j =
0}^{\infty} },  & \mbox{otherwise}.
                                        \end{array}
                                        \right.
\end{align}
\end{corollary}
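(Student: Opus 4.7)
The plan is to prove this corollary by direct case-by-case inspection, combining the asymptotic formulas for Chebyshev coefficients established in Theorems~\ref{thm:asymptotic albebraic singularities} and \ref{thm:asymptotic logarithmic singularity} with the error expansion results of Theorem~\ref{thm:expansion error clenshaw} and Corollary~\ref{cor: mixed asymptotic error}. No new analysis is needed beyond reading off the exponents $n^{-d_k}$ appearing in the Chebyshev expansion of $f$, since those exponents are precisely the parameters entering the Richardson scheme in Algorithm~1.

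First I would treat the algebraic case $f(x)=(1-x)^{\alpha}(1+x)^{\beta}g(x)$. From \eqref{eq:asymptotic expansion Cheby coeff}, the asymptotic of $a_n$ decomposes into a non-oscillating series with exponents $\{2\alpha+2k+1\}_{k\geq 0}$ multiplied by $\sin(\alpha\pi)$, plus an oscillating series (with the factor $(-1)^n$) with exponents $\{2\beta+2k+1\}_{k\geq 0}$ multiplied by $\sin(\beta\pi)$. If $\alpha$ is a nonnegative integer, then $\sin(\alpha\pi)=0$ kills the first series, leaving an expansion of the form \eqref{eq:asymp expan cheby coeff 2} with $d_k=2\beta+2k+1$; Theorem~\ref{thm:expansion error clenshaw} then gives the second branch of \eqref{eq:order algebraic singularities}. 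The case where $\beta$ is an integer is symmetric. If neither $\alpha$ nor $\beta$ is an integer, both series are present, producing the mixed expansion \eqref{eq: mixed asymptotic}, and Corollary~\ref{cor: mixed asymptotic error} yields the union of the two exponent sequences.

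Next I would handle the algebraic-logarithmic case $f(x)=(1-x)^{\alpha}(1+x)^{\beta}\log(1-x)g(x)$ with $\alpha$ a positive integer. Here I appeal to \eqref{eq:left logarithm}: the first summation carries a factor $\sin(\alpha\pi)$ which vanishes, while in the third summation the bracket $\sin(\alpha\pi)(\tilde{\psi}-\log n)+\tfrac{\pi}{2}\cos(\alpha\pi)$ collapses to the constant $\tfrac{\pi}{2}\cos(\alpha\pi)=\pm\tfrac{\pi}{2}$, so the logarithmic factor $\log n$ disappears. What survives is a non-oscillating series with exponents $\{2\alpha+2k+1\}_{k\geq 0}$ together with an oscillating series with exponents $\{2\beta+2k+1\}_{k\geq 0}$ whose coefficient contains $\sin(\beta\pi)$. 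If $\beta$ is an integer, the latter vanishes and one is left with the form \eqref{eq:asymp expan cheby coeff 1}, so Theorem~\ref{thm:expansion error clenshaw} applies with $d_k=2\alpha+2k+1$; otherwise both families survive and Corollary~\ref{cor: mixed asymptotic error} produces the union.

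The proof is essentially bookkeeping, and the only subtlety is verifying in the algebraic-logarithmic case that the apparently problematic $\log n$ term indeed cancels whenever $\alpha$ is a positive integer, so that the error expansion truly lies in the pure power-of-$n$ framework required by Theorem~\ref{thm:expansion error clenshaw}. Once that cancellation is noted, every case reduces to recognizing which of \eqref{eq:asymp expan cheby coeff 1}, \eqref{eq:asymp expan cheby coeff 2}, or \eqref{eq: mixed asymptotic} applies and reading off the exponents, which gives precisely \eqref{eq:order algebraic singularities} and \eqref{eq:order logarithmic singularities}.
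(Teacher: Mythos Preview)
Your proposal is correct and follows essentially the same approach as the paper: the paper does not write out a formal proof of this corollary but instead relies on the discussion paragraph preceding Algorithm~1, which does exactly what you describe---match the asymptotic expansions of $a_n$ from Theorems~\ref{thm:asymptotic albebraic singularities} and \ref{thm:asymptotic logarithmic singularity} (and their corollaries) to the hypotheses \eqref{eq:asymp expan cheby coeff 1}, \eqref{eq:asymp expan cheby coeff 2}, or \eqref{eq: mixed asymptotic}, and then read off the exponent set from Theorem~\ref{thm:expansion error clenshaw} or Corollary~\ref{cor: mixed asymptotic error}. Your observation that the $\log n$ term in \eqref{eq:left logarithm} drops out when $\alpha$ is a positive integer (because it is multiplied by $\sin(\alpha\pi)$) is precisely what the paper encodes in \eqref{eq:logarithm one}; one minor slip is that the case ``$\alpha$ an integer'' is the third, not the second, branch of \eqref{eq:order algebraic singularities}.
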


\begin{example}
Consider $f(x) = (1 - x)^{\alpha} g(x)$ and $\alpha>0$ is not an
integer. From \eqref{eq:class s} we know that $f \in X^s$ and $s =
2\alpha$. On the other hand, from Corollary \ref{cor: order by
exrapolation} we see immediately that $d_j = 2j + 2\alpha + 1$ for
$j \geq 0$. Thus, the convergence rate of the Richardson
extrapolation scheme $R(q,n)$ is
\begin{equation}
I[f] - R(q,n)  = \mathcal{O}(n^{-2q - s - 2}), \quad  q \geq 0.
\end{equation}
This higher order convergence rate is confirmed by numerical
experiments in the next section.
\end{example}

\begin{remark}\label{eq:interior singularities}
If $f(x)$ has an interior singularity inside the interval $[-1,1]$.
For example, suppose that
\begin{equation}
f(x) = (1 - x)^{\alpha} (1 + x)^{\beta} |x - x_0|^\delta g(x),
\end{equation}
where $x_0 \in (-1,1)$ and $\delta \geq 0$ is not an integer. Then,
we can first divide the interval $[-1,1]$ into two parts at $x =
x_0$ and then apply Clenshaw-Curtis quadrature or its extrapolation
acceleration scheme to the resulting two integrals.
\end{remark}

\section{Numerical experiments}\label{sec:example}
In this section we present some concrete examples to show the
convergence rates of Clenshaw-Curtis quadrature and Richardson
extrapolation approach for functions with endpoint singularities. We
apply ``Acceleration one" and ``Acceleration two" to indicate
$R(1,n)$ and $R(2,n)$, respectively. For comparison, we also add the
rate of convergence of Gauss-Legendre quadrature to the following
examples.

\begin{example}
Consider the following function
\begin{equation}\label{eq:test fun 1}
f(x) = (1 - x)^{\alpha} (1 + x)^{\beta} e^x,
\end{equation}
where $\alpha, \beta \geq 0$ are not integers simultaneously.
Obviously, Theorem \ref{thm:asymptotic albebraic singularities}
implies that $f(x) \in X^s$ where $s$ is defined as in
\eqref{eq:class s}. From Remark \ref{eq:d0 and s} and Corollary
\ref{cor: order by exrapolation} we know that the rate of
convergence of Clenshaw-Curtis quadrature is $\mathcal{O}(n^{-d_0 -
1})$ where $d_0 = s+1$, while the rate of convergence of the
Richardson extrapolation scheme $R(q,n)$ is $I[f] - R(q,n) =
\mathcal{O}(n^{-d_q-1})$ and $d_q$ is defined as in \eqref{eq:order
algebraic singularities}. Numerical results are illustrated in
Figure \ref{fig:branch} with two different choices of $\alpha$ and
$\beta$. The left graph of Figure \ref{fig:branch} demonstrates the
case $\alpha = \frac{1}{2}$ and $\beta = 0$ which implies $d_j =
2j+2$ for $j \geq 0$. The right graph of Figure \ref{fig:branch}
demonstrates the case $\alpha = \frac{3}{4}$ and $\beta =
\frac{1}{4}$. From \eqref{eq:order algebraic singularities} we can
deduce that $d_j = j + \frac{3}{2}$ for $j \geq 0$. It can be
observed clearly from Figure \ref{fig:branch} that the rate of
convergence of Clenshaw-Curtis quadrature is $\mathcal{O}(n^{-s-2})$
and the rate of convergence of the extrapolation scheme $R(q,n)$ is
$\mathcal{O}(n^{-d_q - 1})$ for $q = 1,2$, which coincides with our
analysis.

%

\end{example}

\begin{figure}[ht]
\centering
\includegraphics[width=6.5cm]{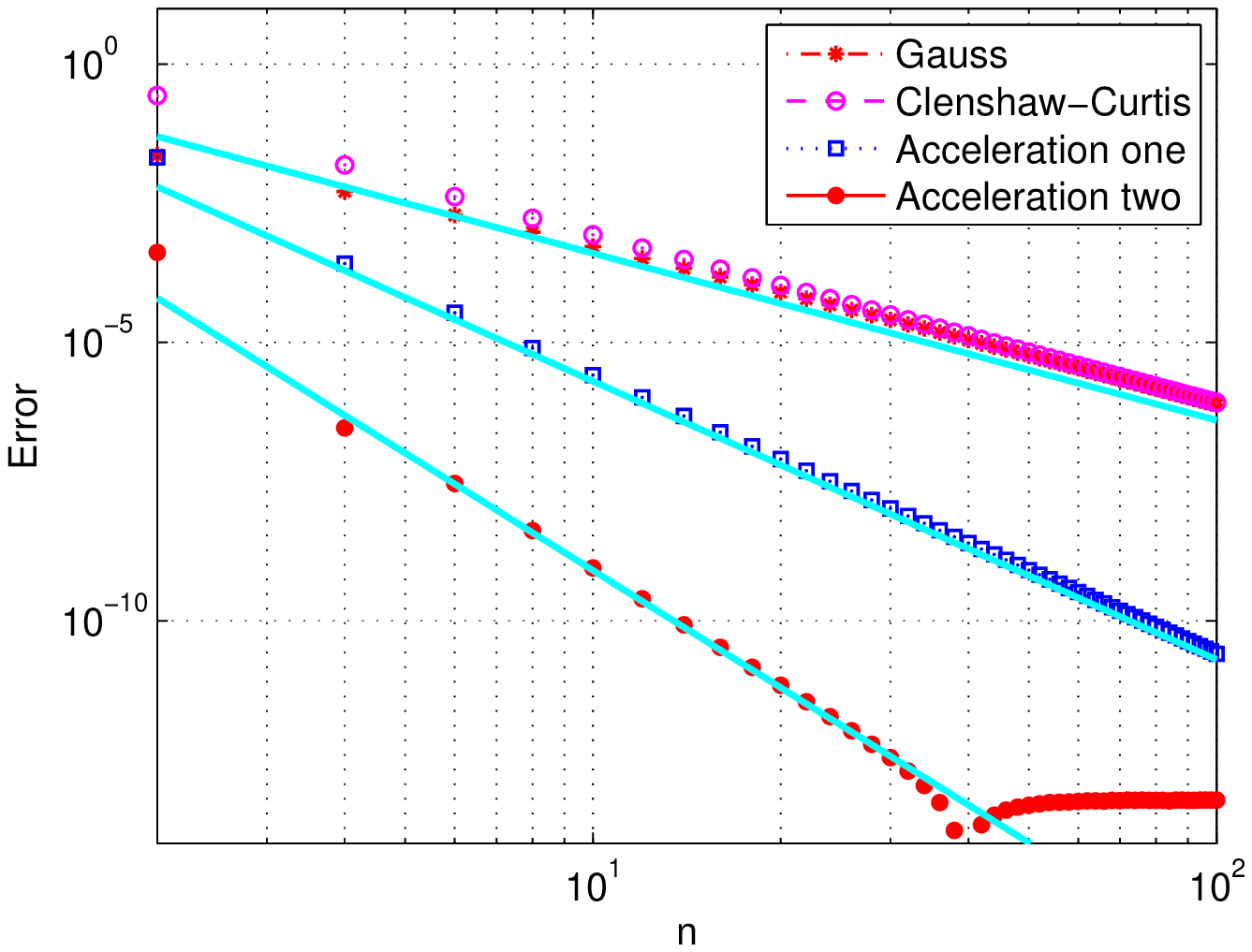}~~
\includegraphics[width=6.5cm]{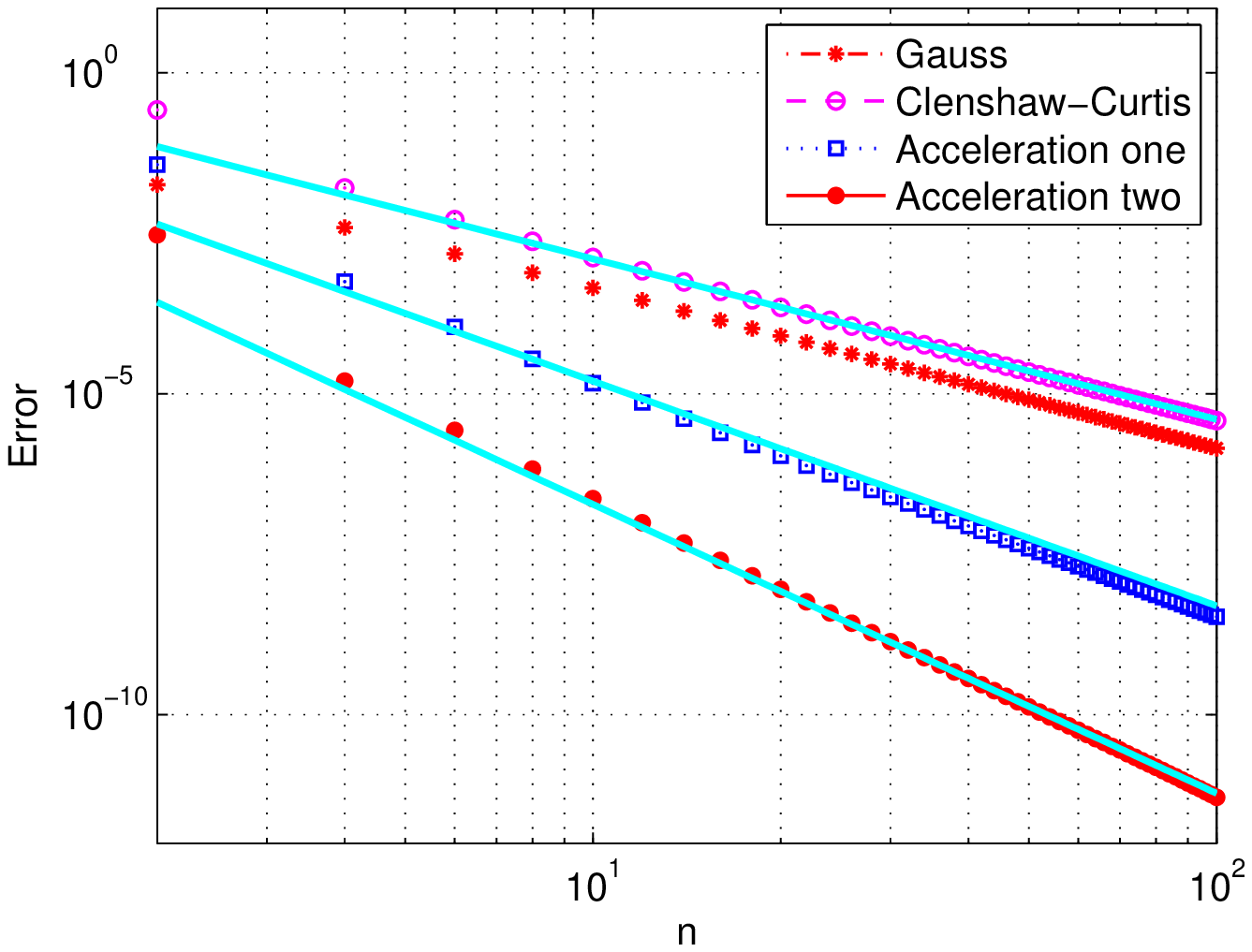}
\caption{Convergence rates of $(n+1)$-point Clenshaw-Curtis and
Gauss quadrature rules for $f(x) = (1-x)^{\alpha} (1+x)^{\beta} e^x
$ with $\alpha=\frac{1}{2}, \beta = 0$ (left) and
$\alpha=\frac{3}{4}, \beta = \frac{1}{4}$ (right). These lines
denote $\mathcal{O}(n^{-d_q-1})$ for $q=0$ (upper), $q=1$ (middle)
and $q=2$ (lower), and $d_q$ is defined as in \eqref{eq:order
algebraic singularities}.} \label{fig:branch}
\end{figure}

\begin{example}
Consider the function
\begin{equation}
f(x) = (1-x)^{\alpha} (1 + x)^{\beta}\log(1-x) \cos(t+1),
\end{equation}
where $\alpha$ is a positive integer and $\beta \geq 0$. Clearly, $f
\in X^s$ and $s$ is defined as in Remark \ref{def: s two}. In Figure
\ref{fig:logrithm} we demonstrate the convergence rate of
Clenshaw-Curtis and Gauss-Legendre quadrature rules and the
Richardson extrapolation schemes $R(1,n)$ and $R(2,n)$. The left
graph of Figure \ref{fig:logrithm} demonstrates the case $\alpha =
1$ and $\beta = 0$. In this case, we have from Definition \ref{def:
s two} and Corollary \ref{cor: order by exrapolation} that $s = 2$
and $d_j = 2j + 2\alpha + 1$ for $j \geq 0$. The right graph of
Figure \ref{fig:logrithm} demonstrates the case $\alpha = 1$ and
$\beta = \frac{1}{2}$. In this case, we have from \eqref{eq:order
logarithmic singularities} that $s = 1$ and $d_j = j+2$ for $j \geq
0$. The numerical results shown in Figure \ref{fig:logrithm}  are
consistent with our theoretical results.
\end{example}

\begin{figure}[ht]
\centering
\includegraphics[width=6.5cm]{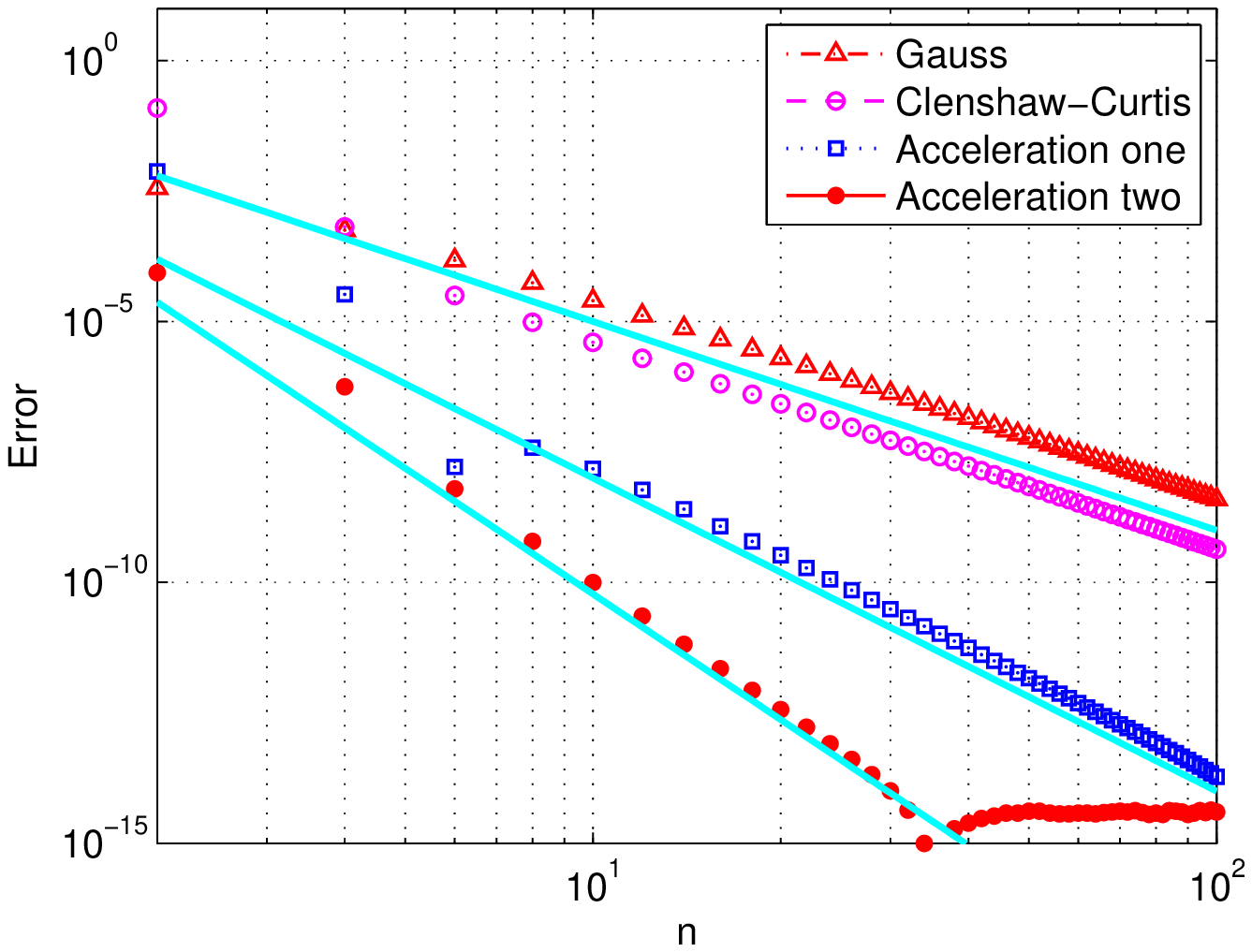}~~
\includegraphics[width=6.5cm]{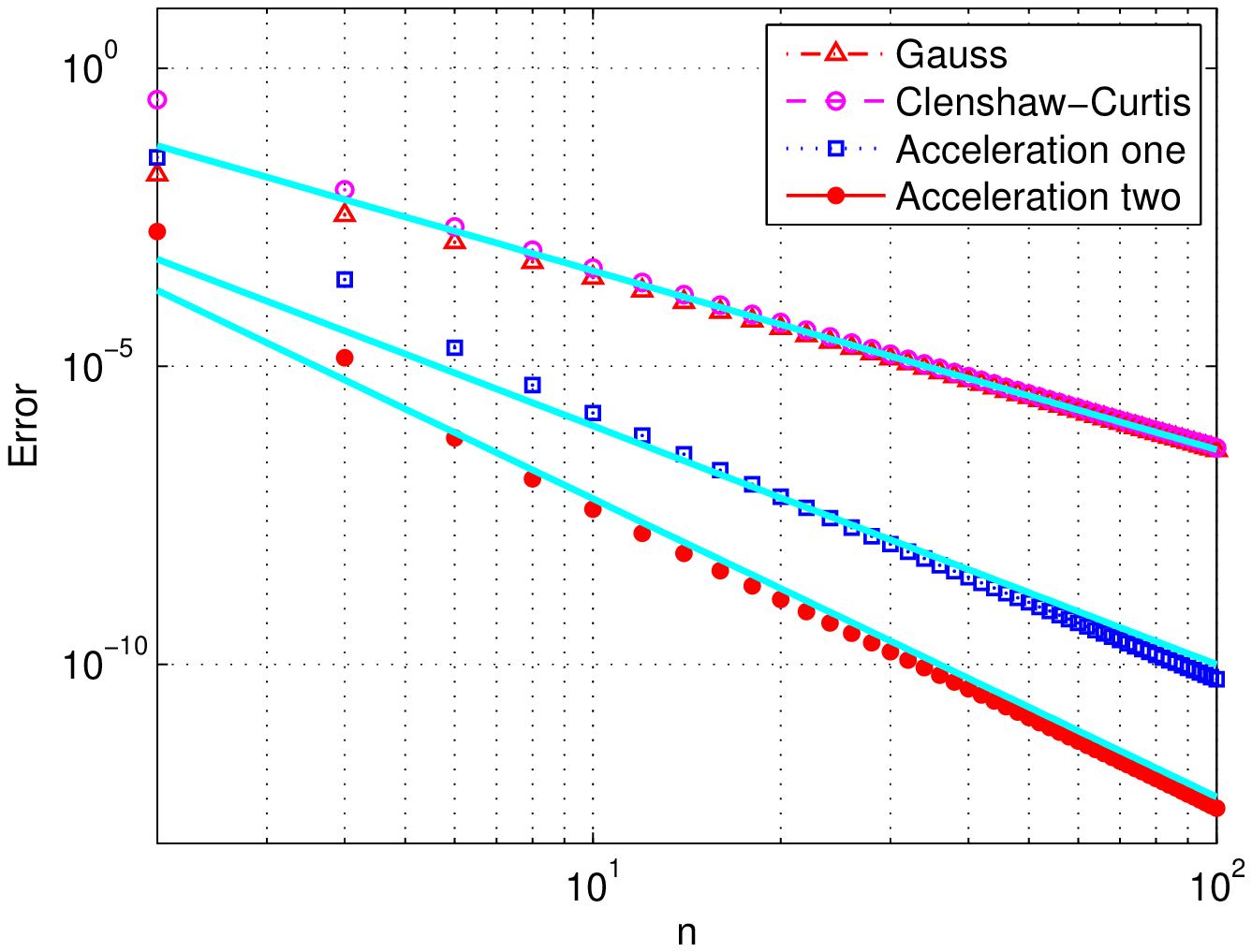}
\caption{Convergence rates of $(n+1)$-point Clenshaw-Curtis and
Gauss quadrature rules for $f(x) = (1-x)^{\alpha} (1 + x)^{\beta}
\log(1-x) \cos(t+1)$ with $\alpha = 1, \beta = 0$ (left) and $
\alpha = 1, \beta = \frac{1}{2}$ (right). The line denotes
$\mathcal{O}(n^{-d_q-1})$ with $q= 0$ (upper), $q=1$ (middle) and $q
= 2$ (lower).} \label{fig:logrithm}
\end{figure}

\begin{example}\label{example: arecos}
Finally, consider the function
\begin{equation}\label{eq:arccos}
f(x) = \arccos(x^{2m}),
\end{equation}
where $m$ is a positive integer. Using repeated integration by
parts, we obtain the asymptotic of its Chebyshev coefficients
\begin{equation*}
a_{2n} \sim \sum_{j=0}^{\infty} \frac{\mu_j}{n^{d_j}},
\end{equation*}
where $d_j = 2j+2$ for $j\geq 0$ and $\mu_j$ are constants depend on
$m$. Here we give explicit expressions for the first three
coefficients of $\mu_j$
\begin{equation}
\mu_0 = - \frac{\sqrt{2m}}{\pi}, ~~~~~ \mu_1 = -
\frac{\sqrt{2m}}{4\pi}\left(m - \frac{1}{2}\right), ~~~~~ \mu_2 =
-\frac{\sqrt{2m}}{16\pi}\left( m^2 - 5m + \frac{9}{4}\right).
\end{equation}
Moreover, $a_{2n+1} = 0$ for $n\geq0$ since the function $f(x)$ is
even. Obviously, $f \in X^1$ and it satisfies the condition of the
Theorem \ref{thm:superconvergence of cc}. Thus, the convergence rate
of the $(n+1)$-point Clenshaw-Curtis quadrature is
$\mathcal{O}(n^{-3})$. Figure \ref{fig:arccos} shows the convergence
rates of the Clenshaw-Curtis and Gauss-Legendre quadrature rules and
the Richardson extrapolation schemes $R(q,n)$ for the function
\eqref{eq:arccos} with two different values of $m$. Clearly, we can
see that the convergence rates of both quadrature rules are
$\mathcal{O}(n^{-3})$. Moreover, the convergence rate of $R(q,n)$ is
$\mathcal{O}(n^{-d_q-1})$.
\end{example}

\begin{figure}[ht]
\centering
\includegraphics[width=6.5cm]{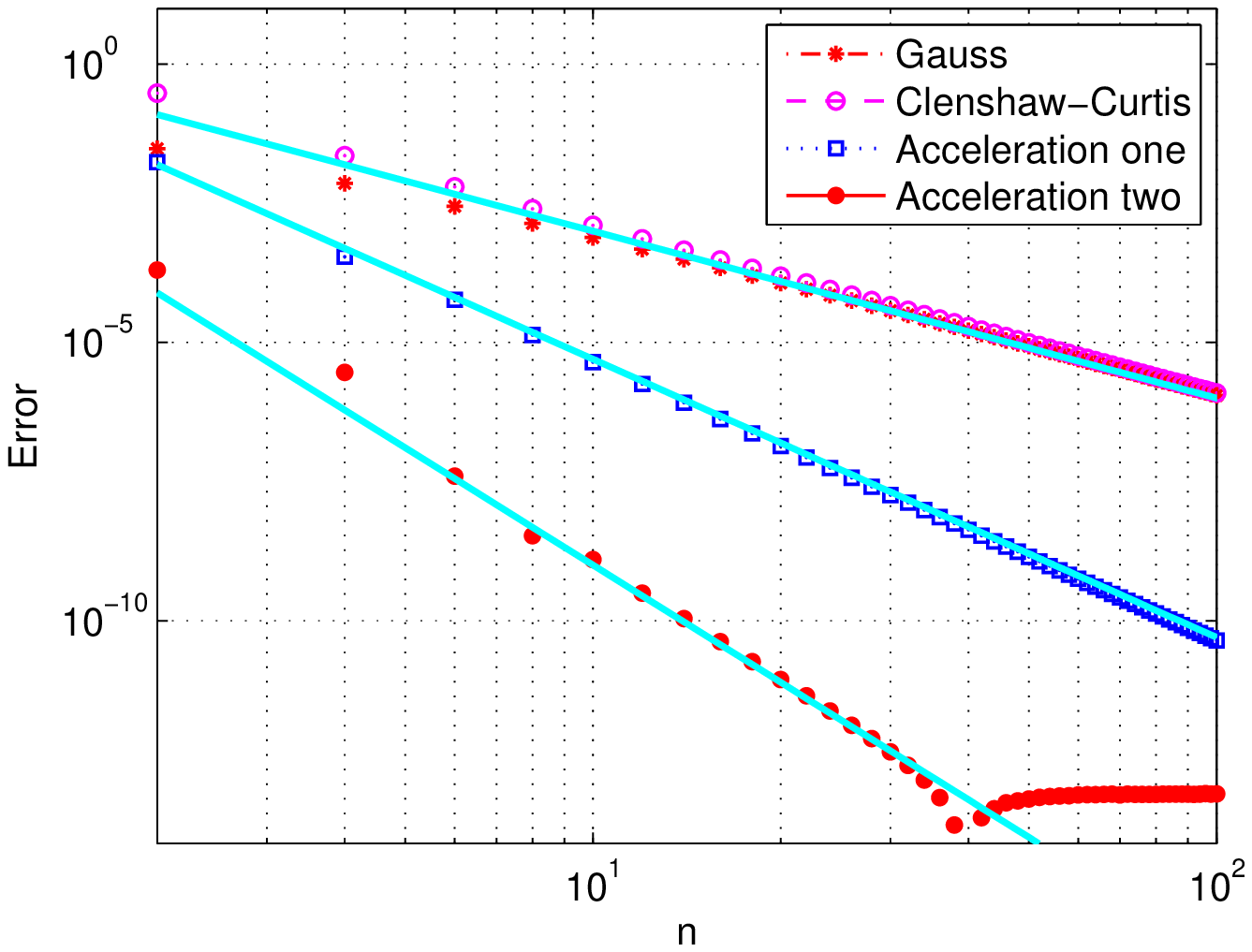}~~
\includegraphics[width=6.5cm]{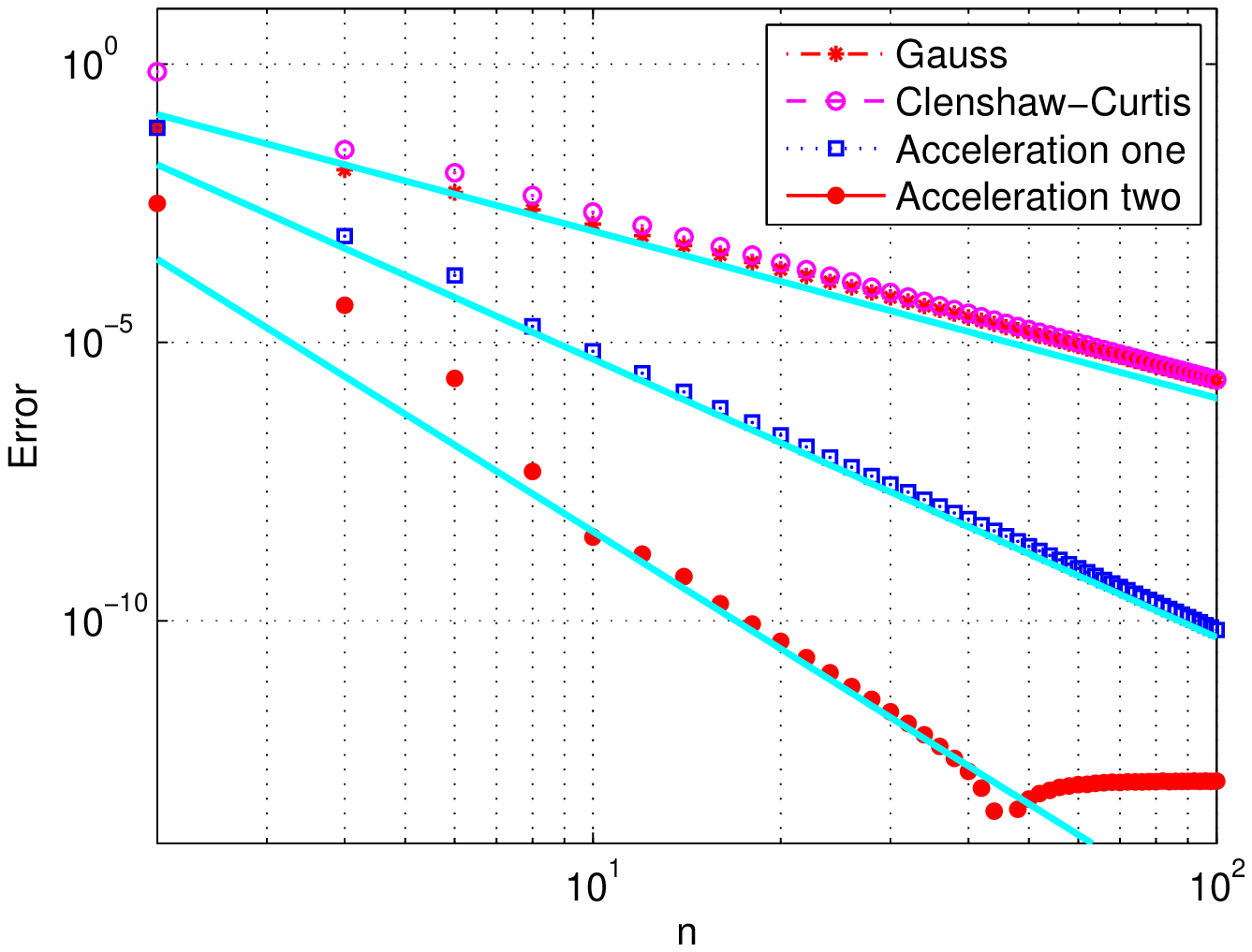}
\caption{Convergence rates of $(n+1)$-point Clenshaw-Curtis and
Gauss quadrature rules and the extrapolation schemes $R(q,n)$ for
$f(x) = \arccos(x^{2m})$ with $m=1$ (left) and $m=3$ (right). These
lines denote $\mathcal{O}(n^{-d_q-1})$ for $q=0$ (upper), $q=1$
(middle) and $q=2$ (lower).} \label{fig:arccos}
\end{figure}

\begin{remark}
From these examples, we can observe that the rate of convergence of
Gauss quadrature is almost indistinguishable with that of
Clenshaw-Curtis quadrature for functions with endpoint singularities
for large $n$.
\end{remark}

\section{Conclusion}\label{sec:conclusion}
In this paper, we have analyzed the rate of convergence of
Clenshaw-Curtis quadrature for functions in $X^s$ which have
algebraic or algebraic-logarithmic endpoint singularities. For such
functions, we show that the rate of convergence can be further
improved to $\mathcal{O}(n^{-s-2})$, which is one power of $n$
better than the optimal estimate given in
\cite{xiang2012clenshawcurtis}. Furthermore, an asymptotic error
expansion for Clenshaw-Curtis quadrature was obtained, based on
which extrapolation schemes such as Richardson extrapolation was
applied to accelerate the convergence of Clenshaw-Curtis quadrature.
In contrast to Gauss-Legendre quadrature, Clenshaw-Curtis quadrature
is a more powerful scheme to integrate functions with endpoint
singularities since its nodes are nested and its quadrature weights
can be evaluated efficiently by the inverse Fourier transform.

\bibliographystyle{amsplain}

\begin{thebibliography}{10}
\bibitem{chawla1968gauss}
 M. M. Chawla and M. K. Jain, {\em Asymptotic error estimates
for the Gauss quadrature formula}, Math. Comp., 22 (1968), 91--97.


\bibitem{davis1984quadrature}
 P. J. Davis and P. Rabinowitz, {\em Methods of Numerical
Integration}, second edition, Academic Press, Orlando, FL, 1984.


\bibitem{erdelyi1955fourier}
A. Erd\'{e}lyi, {\em Asymptotic representations of Fourier integrals
and the method of stationary phase}, J. Soc. Indust. Appl. Math., 3
(1955), 17--27.

\bibitem{erdelyi1956fourier}
A. Erd\'{e}lyi, {\em Asymptotic expansions of Fourier integrals
involving logarithmic singularities}, J. Soc. Indust. Appl. Math., 4
(1956), 38--47.


\bibitem{elliott1965chebyshev}
D. Elliott, {\em The evaluation and estimation of the coefficients
in the Chebyshev series expansion of a function}, Math. Comp., 18
(1964), 274--284.

\bibitem{javed2013trapezoidal}
M. Javed and L. N. Trefethen, {\em A trapezoidal rule error bound
unifying the Euler-Maclaurin formula and geometric convergence for
periodic functions}, Proc. R. Soc. A 470 (2014), 20130571.


\bibitem{lubinsky1984gauss}
D. S. Lubinsky and P. Rabinowitz, {\em Rates of convergence of
Gaussian quadrature for singular integrands}, Math. Comp., 43
(1984), 219--242.


\bibitem{lyness1972fourier}
J. N. Lyness, {\em Adjusted forms of the Fourier coefficient
asymptotic expansion and applications in numerical quadrature},
Math. Comp., 25 (1972), 87--104.

\bibitem{magnus1966formulas}
W. Magnus, F. Oberhettinger and R. P. Soni, {\em Formulas and
Theorems for the special functions of Mathemetical Physics}, Third
enlarged edition, Springer-Verlag, Berlin, 1966.

\bibitem{mason2003chebyshev}
 J. C. Mason and D. C. Handscomb, {\em Chebyshev Polynomials}, CRC
Press, New York, 2003.

\bibitem{OHara1968ccquad}
 H. O'Hara and F. J. Smith, {\em Error estimation in the
Clenshaw-Curtis quadrature formula}, Comput. J., 11 (1968),
213--219.

\bibitem{rabinowitz1968gauss}
 P. Rabinowitz, {\em Gaussian integration of functions with
branch point singularities}, Internat. J. Comput. Math., 2 (1968),
297--306.

\bibitem{rabinowitz1986gauss}
 P. Rabinowitz, {\em Rates of Convergence of Gauss, Lobatto, and Radau
integration rules for singular itegrands}, Math. Comp., 47 (1986),
625--638.

\bibitem{Riess1972ccquadrature}
 R. D. Riess and L. W. Johnson, {\em Error estimates for
Clenshaw-Curtis quadrature}, Numer. Math., 18 (1972), 345--353.

\bibitem{sidi2009variable}
A. Sidi, {\em Variable transformations and Gauss-Legendre quadrature
for integrals with endpoint singularities}, Math. Comp., 78 (2009),
1593--1612.

\bibitem{sidi2009gauss}
A. Sidi, {\em Asymptotic expansions of Gauss-Legendre quadrature
rules for integrals with endpoint singularities}, Math. Comp., 78
(2009), 241--253.


\bibitem{trefethen2008gausscc}
L. N. Trefethen, {\em Is Gauss quadrature better than
Clenshaw-Curtis?}, SIAM Rev., 50 (2008), 67--87.

\bibitem{trefethen2013atap}
L. N. Trefethen, {\em Approximation Theory and Approximation
Practice}, SIAM, Philadelphia, 2013.

\bibitem{tuan1972spectral}
P. D. Tuan and D. Elliott, {\em Coefficients in series expansions
for certain classes of functions}, Math. Comp., 26 (1972), 213--232.

\bibitem{verlinden1997gauss}
P. Verlinden, {\em Acceleration of Gauss-Legendre quadrature for an
integrand with an endpoint singularity}, J. Comput. Appl. Math., 77
(1997), 277--287.

\bibitem{waldvogel2006clenshaw}
J. Waldvogel, {\em Fast construction of the Fej\'{e}r and
Clenshaw-Curtis quadrature rules}, BIT Numer. Math., 46 (2006),
195--202.

\bibitem{xiang2012clenshawcurtis}
S. Xiang and F. Bornemann, {\em On the convergence rates of Gauss
and Clenshaw--Curtis quadrature for functions of limited
regularity}, SIAM J. Numer. Anal., 50 (2012), 2581--2587.





%
%

\end{thebibliography}

\end{document}